\newtheorem{theorem}{Theorem}[section]
\newtheorem{proposition}[theorem]{Proposition}
\newtheorem{lemma}[theorem]{Lemma}
\newtheorem{corollary}[theorem]{Corollary}
\theoremstyle{definition}
\newtheorem{remark}[theorem]{Remark}
\numberwithin{equation}{section}
\begin{document}
\title[]
{Small solutions to inhomogeneous and homogeneous quadratic congruences modulo prime powers}

\author{Stephan Baier}
\address{Ramakrishna Mission Vivekananda Educational and Research Institute, Belur Math, Howrah, West Bengal-711202, INDIA}
\curraddr{}
\email{stephanbaier2017@gmail.com}

\author{Arkaprava Bhandari}
\address{Ramakrishna Mission Vivekananda Educational and Research Institute, Belur Math, Howrah, West Bengal-711202, INDIA}
\curraddr{}
\email{arkapravabhandari@gmail.com}

\author{Anup Haldar}
\address{Department of Mathematics, A. P. C. Roy Government College, Himachal Bihar, Matigara, Siliguri-734010, West Bengal}
\curraddr{}
\email{anuphaldar1996@gmail.com}

\subjclass[2020]{11D09,11D45,11D79,11E12,11L05,11L40}

\keywords{quadratic congruences, small solutions, Gauss sums, Kloosterman sums, Sali\'e sums, distribution of modular square roots, representation by quadratic forms}

\maketitle
\begin{abstract}
We prove asymptotic formulae for small weighted solutions of quadratic congruences of the form $\lambda_1x_1^2+\cdots +\lambda_nx_n^2\equiv \lambda_{n+1}\bmod{p^m}$, where $p$ is a fixed odd prime, $\lambda_1,...,\lambda_{n+1}$ are integer coefficients such that $(\lambda_1\cdots \lambda _{n},p)=1$ and $m\rightarrow \infty$. If $n\ge 6$, $p\ge 5$ and the coefficients are fixed and satisfy $\lambda_1,...,\lambda_n>0$ and $(\lambda_{n+1},p)=1$ (inhomogeneous case), we obtain an asymptotic formula which is valid for integral solutions $(x_1,...,x_n)$ in cubes of side length at least $p^{(1/2+\varepsilon)m}$, centered at the origin.  If $n\ge 4$ and $\lambda_{n+1}=0$ (homogeneous case), we prove a result of the same strength for coefficients $\lambda_i$ which are allowed to vary with $m$. These results extend previous results of the first- and the third-named authors and N. Bag. 
\end{abstract} 

\tableofcontents

\section{Introduction and statement of results}
The existence of small solutions of congruences of the form
\begin{equation} \label{gene}
F(x_1,...,x_n)\equiv \lambda \bmod{q},
\end{equation}
where $F$ is a form in $n$ variables, has received a lot of attention. Of particular interest is the case of diagonal forms 
$$
F(x_1,...,x_n)=\lambda_1x_1^k+\cdots+ \lambda_nx_n^k.
$$
A very general result in this direction was obtained by Cochrane, Ostergaard and Spencer \cite{CoOsSp} who established the existence of a solution to \eqref{gene} for this case in any cube of sidelength $q^{1/k+\varepsilon}$ if $q$ is a prime, $k\ge 2$, 
$0<\varepsilon< 1/(k(k-1))$ and $n>(k-1)/\varepsilon$. Their paper \cite{CoOsSp} also provides a history of results on this problem. The homogeneous case when $\lambda\equiv 0\bmod{q}$ is generally easier to handle and has therefore received more attention than the inhomogeneous case. Particularly strong results have been established for homogeneous quadratic congruences ($\lambda=0$ and $k=2$) in at least three variables ($n\ge 3$) by Schinzel, Schlickewei and Schmidt \cite{SchSchSch}, Heath-Brown \cite{Hea1,Hea2,Hea3} and Hakimi \cite{Hak}. In the present article, we are interested in asymptotic formulae for the number of small solutions of quadratic congruences (rather than just existence results), with a special emphasis on the more difficult inhomogeneous case. We will confine ourselves to prime power moduli $q=p^m$. This allows us to evaluate Kloosterman and Sali\'e sums coming up in our method explicitly and use Hensel-type arguments. Below we review recent research on asymptotic formulae for the number of small solutions of {\it homogeneous} quadratic congruences to prime power moduli.   
 
In \cite{BaHa}, the first- and third-named authors considered small solutions of homogeneous ternary diagonal quadratic congruences to prime power moduli. They proved the following asymptotic result for forms with fixed coefficients. 

\begin{theorem}[Theorem 1 in \cite{BaHa}] \label{mainresult1BaHa}
Fix $\varepsilon>0$, a prime $p\ge 3$ and integers $\lambda_1,\lambda_2,\lambda_3$ such that $(\lambda_1\lambda_2\lambda_3,p)=1$.  Set 
$$
C_p(\lambda_1,\lambda_2,\lambda_3):=\frac{(p-s_p(\lambda_1,\lambda_2,\lambda_3))(p-1)}{p^2},
$$
where
\begin{equation} \label{sdef}
s_p(\lambda_1,\lambda_2,\lambda_3):=2+\left(\frac{-\lambda_1\lambda_2}{p}\right)+
\left(\frac{-\lambda_1\lambda_3}{p}\right)+\left(\frac{-\lambda_2\lambda_3}{p}\right).
\end{equation}
Let $\Phi:\mathbb{R}\rightarrow \mathbb{R}_{\ge 0}$ be a Schwartz class function.
Then as $m\rightarrow \infty$, we have the asymptotic formula
\begin{equation} \label{main}
\sum\limits_{\substack{(x_1,x_2,x_3)\in \mathbb{Z}^3\\ (x_1x_2x_3,p)=1\\ \lambda_1x_1^2+\lambda_2x_2^2+\lambda_3x_3^2 \equiv 0 \bmod{q}}} \Phi\left(\frac{x_1}{N}\right)
\Phi\left(\frac{x_2}{N}\right)\Phi\left(\frac{x_3}{N}\right)\sim
\hat{\Phi}(0)^3\cdot C_p(\lambda_1,\lambda_2,\lambda_3)\cdot \frac{N^3}{q}
\end{equation}
with $q=p^m$, 
provided that $N\ge q^{1/2+\varepsilon}$ and $p>s_p(\lambda_1,\lambda_2,\lambda_3)$.
\end{theorem}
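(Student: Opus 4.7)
The plan is to use the standard delta-method for prime-power moduli: detect the congruence via additive characters, apply Poisson summation to the resulting $x$-sums, extract a main term from the diagonal in the dual variables, and bound the rest via Sali\'e-sum estimates.

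\textbf{Setup and Poisson transform.} I would write $\mathbf 1_{n\equiv 0\bmod q}=q^{-1}\sum_{a\bmod q}e(an/q)$, so the left-hand side of \eqref{main} becomes $q^{-1}\sum_{a}\prod_{i=1}^{3}T_i(a)$ with $T_i(a):=\sum_{(x,p)=1}\Phi(x/N)e(a\lambda_i x^2/q)$. Stratifying the $a$-sum by the $p$-adic valuation $j=v_p(a)$, I write $a=p^jb$ with $0\le j<m$ and $(b,p)=1$ (treating $a=0$ separately), so that the inner phase reduces to modulus $c:=p^{m-j}$. After removing the constraint $(x,p)=1$ by inclusion--exclusion $\sum_{(x,p)=1}=\sum_x-\sum_{x\in p\mathbb Z}$, Poisson summation together with the classical quadratic Gauss sum evaluation yields
\[
T_i(p^jb)\;=\;\frac{N\,\varepsilon_c}{\sqrt c}\left(\tfrac{b\lambda_i}{c}\right)\sum_{k\in\mathbb Z}\hat\Phi\!\left(\tfrac{Nk}{c}\right)e\!\left(-\tfrac{\overline{4b\lambda_i}\,k^2}{c}\right)
\]
modulo the $p\mid x$ correction, with the Schwartz decay of $\hat\Phi$ restricting $|k|\ll c/N$.

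\textbf{Main term via Sali\'e sums.} Taking the product over $i$ and summing over $b$ coprime to $c$, the three Jacobi factors combine (using $(\tfrac{b^3}{c})=(\tfrac{b}{c})$) into a single Sali\'e sum in $b$ with parameter $n(\mathbf k):=\sum_{i=1}^{3}k_i^2\overline{\lambda_i}$. The diagonal contributions $n(\mathbf k)\equiv 0\bmod c$---and in particular the trivial $j=m$, $\mathbf k=\mathbf 0$ term---furnish the main term. Carefully tracking the Jacobi symbols, the three Legendre-symbol combinations $\left(\tfrac{-\lambda_i\lambda_{i'}}{p}\right)$ for $1\le i<i'\le 3$ assemble with the constant $2$ into $s_p(\lambda_1,\lambda_2,\lambda_3)$ as in \eqref{sdef}, and the total main term equals $\hat\Phi(0)^3\,C_p(\lambda_1,\lambda_2,\lambda_3)\,N^3/q$.

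\textbf{Error bound and main obstacle.} For $n(\mathbf k)\not\equiv 0\bmod c$, the Weil-type Sali\'e bound gives $\ll (n(\mathbf k),c)^{1/2}c^{1/2+\varepsilon}$, and coupled with the Schwartz truncation $|k_i|\ll c/N$, summation over $j$ should produce an error that is $o(N^3/q)$ once $N\ge q^{1/2+\varepsilon}$. The principal technical difficulty is precisely achieving this square-root threshold: a direct Sali\'e estimate (ignoring the GCD factor) only yields savings down to $N\ge q^{2/3+\varepsilon}$, so one must organize the $(j,\mathbf k)$ double sum according to the $p$-adic valuation $v_p(n(\mathbf k))$ to exploit the sparsity of $\mathbf k$ for which $n(\mathbf k)$ is highly divisible by $p$. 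A secondary subtlety is the clean combinatorial identification of $C_p$: tracing all $\varepsilon_c$-factors, Jacobi symbols, and contributions from various $j$ through the Poisson inversion to recognize the exact combination producing $s_p$, and verifying under the hypothesis $p>s_p$ that $C_p>0$ so the asymptotic is genuinely nontrivial.
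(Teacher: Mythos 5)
Your high-level pipeline---detecting the congruence with additive characters, stratifying by the $p$-adic valuation of $a$, applying Poisson summation, evaluating the Gauss sums, and arriving at a Sali\'e sum in $b$ with parameter $\overline{4}\sum_i \overline{\lambda_i} k_i^2$---does match the method used in \cite{BaHa} and echoed in this paper's proofs of Theorems \ref{thm1} and \ref{thm1'}. The identification of the main term from the zero frequency and the role of $s_p$, $C_p>0$ under $p>s_p$ are also the right accounting.

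However, there is a genuine gap in your error analysis, and it is exactly the step you flag as ``the principal technical difficulty.'' You propose to reach the $q^{1/2+\varepsilon}$ threshold by coupling the Weil-type Sali\'e bound $\ll (n(\mathbf k),c)^{1/2}c^{1/2+\varepsilon}$ with a stratification of $\mathbf k$ by $v_p(n(\mathbf k))$. This does not suffice: after summing $(n(\mathbf k),p^s)^{1/2}$ over the roughly $(p^s/N)^3$ relevant $\mathbf k$, the gcd factor recovers only a bounded constant (the number of $\mathbf k$ with $v_p(n(\mathbf k))\ge v$ decays like $p^{-v}$ while the gcd factor grows like $p^{v/2}$, so the sum is dominated by $v$ bounded), and you land back at the $q^{2/3+\varepsilon}$ barrier you identified. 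The actual mechanism used in \cite{BaHa}---and spelled out analogously in Section 3 of the present paper---is not to \emph{bound} the Sali\'e sums but to \emph{evaluate} them explicitly (Proposition \ref{Kloosterman}(ii)): $K_1$ collapses to $\epsilon_c\left(\tfrac{\cdot}{c}\right)c^{1/2}\sum_{v^2\equiv\cdot\bmod c}e_c(2v)$, i.e., a cosine of a modular square root $\sqrt{n(\mathbf k)\Lambda}/p^s$. The extra saving to get below $q^{2/3}$ then comes from cancellation \emph{within} the sum over $\mathbf k$ of these oscillating phases, obtained by grouping $\mathbf k$ along level sets $n(\mathbf k)=k$ and estimating $\sum_k \tau(k)\cos(2\pi\sqrt{k\Lambda}/p^s)$ via a Hensel lift of the square root and a linear exponential sum bound (this is Proposition \ref{1stprop} here). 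Without that additional layer of cancellation---which your sketch does not supply---the proof stalls at $q^{2/3}$. As a secondary point, in the homogeneous ternary case the linear parameter in the Sali\'e sum is $0$ (you are evaluating $K_1(a,0,p^s)$), so Proposition \ref{Kloosterman}(ii) in the form you invoke does not directly apply; the degenerate case needs its own treatment, which \cite{BaHa} handles but your sketch glosses over.
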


In \cite{Hal}, the third-named author extended the above result to non-diagonal homogeneous ternary quadratic forms. In \cite{BaBaHa}, this result was further extended to homogeneous quadratic forms in at least 3 variables by N. Bag and the first- and third-named authors, with a slightly different condition in place of $(x_1\cdots x_n,p)=1$. They proved the following.

\begin{theorem}[Theorem 2 in \cite{BaBaHa} for $(x_{0,1},...,x_{0,n})=(0,...,0)$)]\label{MT2}
Fix $\varepsilon>0$, $n\in\mathbb{N}$ with $n\ge 3$ and a prime $p\ge 3$. 
Let $(a_{i,j})_{1\le i,j\le n}$ be a fixed symmetric integral matrix such that the quadratic form  
$$
Q(x_1,...,x_n):=\sum\limits_{i,j=1}^n a_{i,j}x_ix_j
$$
is non-singular modulo $p$. Set $A_p(Q):=\sharp\mathcal{A}(Q)/p^{n-1}$, where
\begin{align*}
\mathcal{A}(Q)&:=\left\{(x_1,...,x_{n})\in\mathbb{Z}^n:~0\leq x_1,...,x_{n}\leq p-1,\ (x_1,...,x_n)\not=(0,...,0), \ Q(x_1,...,x_n)\equiv 0 \bmod p\right\}.
\end{align*}
Let $\Phi:\mathbb{R}\rightarrow \mathbb{R}_{\ge 0}$ be a Schwartz class function.
Then as $m\rightarrow \infty$, we have the asymptotic formula
\begin{equation*}
\sum\limits_{\substack{(x_1,...,x_{n})\in \mathbb{Z}^{n}\\ (x_1,...,x_n)\not\equiv (0,...,0)\bmod p\\ Q(x_1,...,x_n)\equiv 0 \bmod{q}\\
}}
\prod_{i=1}^{n}\Phi\left(\frac{x_i}{N}\right)\sim A_p(Q)\cdot
\hat{\Phi}(0)^{n}\cdot \frac{N^{n}}{q}
\end{equation*}
with $q=p^m$, 
provided that $N\ge  q^{1/2+\epsilon}$.
\end{theorem}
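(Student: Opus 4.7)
The plan is to detect the congruence $Q(\mathbf{x})\equiv 0\bmod q$ by orthogonality of additive characters modulo $q=p^m$,
$$\mathbf{1}_{Q(\mathbf{x})\equiv 0\bmod q} = \frac{1}{q}\sum_{a=0}^{q-1} e\!\Bigl(\frac{aQ(\mathbf{x})}{q}\Bigr),$$
and to decompose the $a$-sum by its $p$-adic valuation, writing each nonzero $a$ uniquely as $cp^{m-k}$ with $1\le k\le m$ and $c\in(\mathbb{Z}/p^k\mathbb{Z})^\times$. The non-vanishing condition $\mathbf{x}\not\equiv\mathbf{0}\bmod p$ will be handled by the identity $\mathbf{1}_{\mathbf{x}\not\equiv\mathbf 0\bmod p}=1-\mathbf{1}_{\mathbf{x}\equiv\mathbf 0\bmod p}$ followed by the substitution $\mathbf{x}=p\mathbf{y}$ in the second piece, which reduces it to a sum of the same shape with dilated weight and smaller modulus, to be treated in parallel with the unrestricted sum.

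For each fixed $k$ and $c$ the inner weighted sum over $\mathbf{x}$ is split into residue classes modulo $p^k$ and attacked by Poisson summation in each class. This expresses it as $(N/p^k)^n$ times the dual sum
$$\sum_{\mathbf{h}\in\mathbb{Z}^n}\prod_{i=1}^n\hat\Phi\!\left(\frac{h_iN}{p^k}\right)G_k(c,\mathbf{h}),\qquad G_k(c,\mathbf{h}):=\sum_{\mathbf{r}\bmod p^k} e\!\Bigl(\frac{cQ(\mathbf{r})+\mathbf{h}\cdot\mathbf{r}}{p^k}\Bigr).$$
Because $p$ is odd and $Q$ is non-singular mod $p$, a linear substitution in $\mathrm{GL}_n(\mathbb{Z}_p)$ diagonalizes $Q$ over $\mathbb{Z}_p$, so $G_k(c,\mathbf{h})$ factors into one-variable quadratic Gauss sums. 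Completing the square yields the closed form
$$G_k(c,\mathbf{h})=\varepsilon(c,k)\,p^{kn/2}\,e\!\Bigl(-\frac{\overline{4c}\,Q^*(\mathbf{h})}{p^k}\Bigr),$$
where $Q^*$ is the dual form attached to the inverse of $(a_{i,j})$ and $\varepsilon(c,k)$ is a unit-modulus factor involving Jacobi symbols in $c$ and $\det Q$. Schwartz decay of $\hat\Phi$ truncates $|h_i|\ll p^kN^{\varepsilon-1}$, an increasingly stringent restriction as $k$ grows.

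Substituting this evaluation and summing over primitive $c\bmod p^k$ yields, for each $\mathbf{h}$, a Ramanujan-type sum when $n$ is even and a Sali\'e-type sum when $n$ is odd. Standard estimates (trivial $\ll p^k$ in the even case, the closed-form Sali\'e evaluation $\ll p^{k/2}(Q^*(\mathbf{h}),p^k)^{1/2}$ in the odd case) combine with the Gauss-sum size $p^{kn/2}$ and the $\mathbf{h}$-truncation to bound the total off-main contribution by $O_\varepsilon(N^nq^{-1-\delta})$ once $N\ge q^{1/2+\varepsilon}$, giving a genuine power saving over the target. The main term is assembled from the $a=0$ contribution together with the $\mathbf{h}=\mathbf{0}$ pieces of the $k\ge 1$ sums and the inclusion-exclusion correction for $\mathbf{x}\equiv\mathbf 0\bmod p$; a short Gauss-sum identity identifies this aggregate with $\sharp\mathcal{A}(Q)/p^{n-1}=A_p(Q)$, producing the claimed $A_p(Q)\hat\Phi(0)^nN^n/q$.

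The principal obstacle is precisely this last step: tracking the phases $\varepsilon(c,k)$ and the dual form $Q^*$ coordinate-freely through the diagonalization, and then showing cleanly that the main-term contributions aggregate to the arithmetic local density $A_p(Q)$. Equivalently, one must establish an identity expressing $\sharp\mathcal{A}(Q)$ via one-variable Gauss sums in $c$ and $\det Q$ that matches exactly the small-$k$, $\mathbf{h}=\mathbf{0}$ terms of the Poisson expansion. Once this identity is in place, the off-main-term bounds reduce to the Kloosterman and Sali\'e estimates already deployed in the diagonal case in \cite{BaHa} and in the ternary non-diagonal case in \cite{Hal}, so no new analytic input is needed and the remaining work is essentially algebraic bookkeeping around the chosen diagonalization.
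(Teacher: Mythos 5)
The statement (Theorem \ref{MT2}) is quoted in this paper from [BaBaHa] and is not proved here, so the natural objects of comparison are the closely parallel proof of Theorem \ref{thm1} in Section 3 and the error-bounding strategy borrowed from [BaBaHa] in Section 4. Your high-level skeleton --- orthogonality, writing nonzero $a$ as $cp^{m-k}$, Poisson summation in residue classes modulo $p^k$, diagonalization over $\mathbb{Z}_p$ followed by one-variable Gauss-sum evaluation, inclusion--exclusion for $\mathbf x\not\equiv\mathbf 0\bmod p$, and collecting the $\mathbf h=\mathbf 0$ contributions into the local density --- is the right approach and matches the cited proof in spirit.

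The gap is in the error-term estimate, and it is fatal precisely in the case $n=3$, which Theorem \ref{MT2} covers. After the $c$-sum you propose to bound the Sali\'e piece by $\ll p^{k/2}(Q^*(\mathbf h),p^k)^{1/2}$ and then to sum trivially over $\mathbf h$ in the box $|h_i|\ll p^kN^{\varepsilon-1}$. For $n=3$ this yields a level-$k$ contribution of order
$$
\frac{1}{q}\left(\frac{N}{p^k}\right)^{3} p^{3k/2}\left(\frac{p^k}{N}\right)^{3}p^{k/2}\asymp\frac{p^{2k}}{q},
$$
so the $k$ near $m$ terms are of size $\asymp q$, exceeding the expected main term $N^3/q\approx q^{1/2}$ by roughly a factor of $q^{1/2}$. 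There is no power saving, and the threshold $N\ge q^{1/2+\varepsilon}$ is simply out of reach along this route. What is actually required, and what your sketch omits, is cancellation among the oscillatory terms $\cos\bigl(2\pi v_{\mathbf h}/p^k\bigr)$ coming from the closed-form Sali\'e evaluation, where $v_{\mathbf h}$ is a square root modulo $p^k$ of a unit multiple of $Q^*(\mathbf h)$. Establishing this cancellation --- by approximating the representation numbers of the dual form and feeding the result into an exponential-sum estimate for square roots running over arithmetic progressions, exactly the machinery of Proposition \ref{1stprop}, Corollary \ref{2stcor} and Lemma \ref{1stlemma} of the present paper and the analogous steps in [BaHa], [Hal], [BaBaHa] --- is where the real work lies. (For $n$ even the purely trivial $\ll p^k$ you propose for the $c$-sum is likewise too lossy: one must use that the Ramanujan sum vanishes unless $p^{k-1}\mid Q^*(\mathbf h)$, a condition that drastically thins the $\mathbf h$-sum and is precisely what the constraint $\tilde Q(\mathbf l)\equiv 0\bmod p^{m-r-2}$ in \eqref{seconderror} records.) Conversely, the step you flag as the principal obstacle --- identifying the aggregate of $\mathbf h=\mathbf 0$ contributions with $A_p(Q)$ --- is routine: the zero frequency immediately produces the count of solutions modulo $p^m$ divided by $p^{m(n-1)}$, which Hensel's lemma shows is independent of $m$ and equal to $A_p(Q)$ (compare \eqref{maintermev}); no delicate phase bookkeeping through the diagonalization is required.
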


We note that for fixed coefficients $a_{i,j}$, the asymptotic behaviour of the number of solutions changes around the point $N=q^{1/2}$: If $N\le q^{1/2-\varepsilon}$ with $q=p^m$ large enough, then the above congruence turns into the equation 
$$
Q(x_1,x_2,...,x_{n})= 0
$$ 
since the squares of the variables are much smaller than the modulus $q$. In this case, the number of solutions in question is zero if $Q$ is definite and satisfies an asymptotic of the form $\sim C_p N^{n-2}\log N$ if $Q$ is indefinite, where $C_p$ is a suitable constant depending on $p$.

For the situation when the coefficients are allowed to vary with the modulus $q$, the first- and third-named authors proved the following for homogeneous ternary diagonal quadratic forms in \cite{BaHa}.

\begin{theorem}[Theorem 2 in \cite{BaHa}] \label{mainresult2BaHa}
Let the conditions in Theorem \ref{mainresult1BaHa} be kept except that $\lambda_1,\lambda_2,\lambda_3$ are no longer fixed but allowed to vary with $m$. Also suppose that $p>s_p(\lambda_1,\lambda_2,\lambda_3)$. Then the asymptotic formula \eqref{main} holds if $N\ge q^{11/18+\varepsilon}$.
\end{theorem}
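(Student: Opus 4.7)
The plan is to run the additive-character / Poisson-summation argument of Theorem \ref{mainresult1BaHa} while tracking full uniformity in the coefficients $\lambda_1,\lambda_2,\lambda_3$. First I would detect the congruence with the orthogonality relation
$$
\mathbf{1}_{n\equiv 0\bmod q}=\frac{1}{q}\sum_{a\bmod q}e^{2\pi ian/q},
$$
and handle the coprimality condition $(x_1x_2x_3,p)=1$ by a short inclusion--exclusion over powers of $p$. This reduces the weighted count to $q^{-1}\sum_{a\bmod q}\prod_{i=1}^{3}S_i(a)$, where $S_i(a)=\sum_{(x_i,p)=1}\Phi(x_i/N)e^{2\pi ia\lambda_ix_i^2/q}$. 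Writing $a=p^{m-n}b$ with $0\le n\le m$ and $(b,p)=1$, Poisson summation on each $S_i(a)$ produces classical quadratic Gauss sums modulo $p^n$ that evaluate in closed form, yielding Jacobi symbols $\bigl(\tfrac{b\lambda_i}{p^n}\bigr)$, a factor $\sqrt{p^n}$, and a quadratic exponential in the dual frequency $h_i$.

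The main term arises from the zero dual frequency in all three factors. Aggregating over $n$, the resulting sum $\sum_{(b,p)=1}\prod_i\bigl(\tfrac{b\lambda_i}{p^n}\bigr)$ collapses to the constant $C_p(\lambda_1,\lambda_2,\lambda_3)N^3\hat{\Phi}(0)^3/q$; this computation is identical to the one in \cite{BaHa} and is insensitive to the size of the $\lambda_i$. Nontrivial dual frequencies $(h_1,h_2,h_3)\neq(0,0,0)$ assemble into a triple sum over the $h_i$ weighted by $\hat{\Phi}(h_iN/p^n)$ and coupled through a complete Sali\'e-type character sum
$$
\mathcal{S}(H;p^n)=\sum_{\substack{b\bmod p^n\\(b,p)=1}}\Bigl(\tfrac{b}{p^n}\Bigr)e^{2\pi i\overline{4b}H/p^n},\qquad H\equiv \overline{\lambda_1}h_1^2+\overline{\lambda_2}h_2^2+\overline{\lambda_3}h_3^2\bmod p^n,
$$
which admits the Weil-type bound $|\mathcal{S}(H;p^n)|\ll\gcd(H,p^n)^{1/2}p^{n/2}$.

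The main obstacle is that, unlike in the fixed-coefficient case, the inverses $\overline{\lambda_i}$ may conspire with the $h_i$ to force $H$ into arbitrarily high $p$-adic valuation for many frequency triples, so the Sali\'e bound degenerates on a nonnegligible part of the $h$-sum. I would address this by splitting the $h$-sum according to the valuation $v=v_p(H)$: for small $v$ the Sali\'e bound yields essentially square-root cancellation $p^{n/2+v/2}$, while for large $v$ one controls the count $\#\{(h_1,h_2,h_3):p^v\mid \overline{\lambda_1}h_1^2+\overline{\lambda_2}h_2^2+\overline{\lambda_3}h_3^2,\ |h_i|\ll p^n/N\}$ through a lattice-point estimate for the ternary quadratic form. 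Because the coefficients now vary with $m$, this count can only be bounded by a convexity-type argument rather than by specific equidistribution of solutions, producing a weaker estimate. Balancing these two regimes against the truncation length of $\hat{\Phi}$ and the target $N^3/q$ is the optimization that forces the threshold $N\ge q^{11/18+\varepsilon}$ in place of the $q^{1/2+\varepsilon}$ of Theorem \ref{mainresult1BaHa}; the exponent $11/18$ is exactly the crossover point of the Sali\'e and convexity bounds in the $(N,n,v)$-parameter space.
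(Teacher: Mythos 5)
Before anything else: this paper does not prove Theorem~\ref{mainresult2BaHa}; it merely cites it from \cite{BaHa}. What the paper does tell you about the \cite{BaHa} method is in the paragraph after Theorem~\ref{thm1'} and in Section~4, so that is the benchmark you should compare against. Your overall skeleton --- additive characters, Poisson summation, Gauss-sum evaluation, a character sum in the dual variable --- matches. The decisive step is where you go wrong.

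You write the dual character sum as $\mathcal{S}(H;p^n)=\sum_{(b,p)=1}\bigl(\tfrac{b}{p^n}\bigr)e_{p^n}(\overline{4b}H)$ and propose the ``Weil-type bound'' $|\mathcal{S}(H;p^n)|\ll\gcd(H,p^n)^{1/2}p^{n/2}$. That bound is the right shape for a genuine Sali\'e or Kloosterman sum (two nontrivial entries), but here the second entry is $0$ because the form is \emph{homogeneous}. After the substitution $y=\overline{4b}$ the sum becomes $\sum_{(y,p)=1}\bigl(\tfrac{y}{p^n}\bigr)e_{p^n}(Hy)$, which for $n$ even is a Ramanujan sum and for $n$ odd a Gauss-type sum; in either case it \emph{vanishes identically} unless $p^{n-1}\mid H$. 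This vanishing is not a refinement you can afford to skip --- it is the whole game. Using the Weil bound uniformly at $v_p(H)=0$ assigns weight $p^{n/2}$ to essentially every one of the $\asymp (p^n/N)^3$ dual frequency triples; carrying that through the normalisation $N^3 p^{-m(n+1)}\cdot p^{3m/2}$ gives an error of order $q$, which beats the main term $N^3/q$ only when $N\gg q^{2/3}$. Since $2/3>11/18$, the Weil bound alone cannot reach the claimed threshold, and the ``balancing'' you invoke cannot rescue it: the losing regime is $v=0$, where your bound is wrong (nonzero) but the truth is zero.

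What actually happens in \cite{BaHa} --- and what this paper spells out for the $n\ge 4$ analogue in Section~4, in particular equation~\eqref{seconderror} and Lemma~\ref{Sbound} --- is that the vanishing forces the error term onto the thin set of dual vectors $\mathbf{h}$ with $\tilde{Q}(\mathbf{h})\equiv 0\bmod p^{n-1}$, i.e.\ a \emph{dual count of solutions of a quadratic congruence}. That count is then attacked by the Cauchy--Schwarz inequality (split the variables into two groups, square, diagonalise), reducing it to a count of solutions of \emph{linear} congruences; for $n\ge 4$ the split $2+2$ is symmetric and gives the $q^{1/2+\varepsilon}$ threshold, while for $n=3$ the split is unbalanced and this is precisely what produces the exponent $11/18$. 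Your ``convexity-type lattice-point estimate'' gestures at this final count but commits to no mechanism; without Cauchy--Schwarz (or an equivalent device) you have no route from the quadratic congruence count to something tractable, and your closing claim that the crossover is ``exactly'' $11/18$ is unsupported. In short: right opening, wrong lemma for the character sum, and the indispensable Cauchy--Schwarz step is missing.
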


This should be compared to Heath-Brown's result in \cite[Theorem 1]{Hea3} for composite moduli $q$ and general quadratic forms in three variables, which establishes the existence of a non-trivial solution of height $\max\{|x_1|,|x_2|,|x_3|\}\ll q^{5/8+\varepsilon}$. 

The goals of the present article are two-fold. Firstly, we aim to prove a variant of Theorem \ref{MT2} for {\it inhomogeneous} quadratic forms. This constitutes the larger part of this article. Secondly, we aim to show that Theorem \ref{MT2} remains valid for {\it homogeneous} diagonal forms in more than 3 variables if the coefficients are allowed to vary with the modulus. That is, if $n>3$, the said result holds for {\it arbitrary} coefficients whenever $N\ge q^{1/2+\varepsilon}$. In contrast, for {\it ternary} forms, it appears difficult to improve the exponent $11/18$ in Theorem \ref{mainresult2BaHa} to $1/2$.     

Our first main result in this article is the following. 

\begin{theorem}\label{thm1}
 Fix $\epsilon>0$, $n\in\mathbb{N}$ with $n\ge 6$ and a prime $p\ge 5$. Let  
\begin{equation}
    Q(x_1,\ldots,x_n):=\lambda_1x_1^2+\lambda_2x_2^2+\cdots+\lambda_nx_n^2-\lambda_{n+1}
\end{equation}
be an inhomogeneous quadratic form with fixed integer coefficients $\lambda_1,...,\lambda_{n+1}$ such that $(\lambda_1\cdots \lambda_{n+1},p)=1$ and the homogeneous quadratic form $\lambda_1x_1^2+\cdots +\lambda_nx_n^2$ is positive definite, i.e. $\lambda_i>0$ for $i=1,...,n$.
Set $B_p(Q):=\sharp\mathcal{B}(Q)/p^{n-1}$, where 
$$
\mathcal{B}(Q):=\left\{(x_1,...,x_n)\in\mathbb{Z}^n : 1\leq x_1,...,x_n\leq p-1,\ Q(x_1,...,x_n)\equiv0\bmod p  \right\}.
$$
Let $\Phi:\mathbb{R}\to\mathbb{R}_{\geq0} $ be a Schwartz class function whose Fourier transform is a bump function. 
Then as $m\to\infty$, we have the asymptotic formula 
    \begin{equation*}
        \sum_{\substack{(x_1,...,x_n)\in\mathbb{Z}^n\\(x_1x_2\cdots x_n,p)=1\\Q(x_1,...,x_n)\equiv 0\bmod p^m}} \prod_{i=1}^n\Phi\left(\frac{x_i}{N}\right)\sim B_p(Q)\cdot \hat{\Phi}(0)^n\cdot\frac{N^n}{q}
    \end{equation*}
with $q=p^m$, provided that $N\geq q^{1/2+\varepsilon}$.  In particular, if $m$ is large enough, then any quadratic congruence $Q(x_1,...,x_n)\equiv 0\bmod{q}$ with $Q$ satisfying the above conditions has a solution $(x_1,...,x_n)$ with $\max\{|x_1|,...,|x_n|\}\le q^{1/2+\varepsilon}$.  
\end{theorem}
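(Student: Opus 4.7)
The natural approach is to detect the congruence by the orthogonality of additive characters modulo $q$,
\[
\mathbf{1}_{Q(\vec x)\equiv 0\bmod q}=\frac{1}{q}\sum_{a\bmod q}e\!\left(\frac{aQ(\vec x)}{q}\right),
\]
and, after a two-term M\"obius decomposition removing the condition $(x_i,p)=1$, to recast the sum under study as
\[
S=\frac{1}{q}\sum_{a\bmod q}e\!\left(\frac{-a\lambda_{n+1}}{q}\right)\prod_{i=1}^{n}T_i(a),\qquad T_i(a):=\sum_{\substack{x\in\mathbb{Z}\\(x,p)=1}}\Phi(x/N)\,e\!\left(\frac{a\lambda_i x^2}{q}\right).
\]
I would then stratify the $a$-sum by its $p$-adic valuation, writing $a=p^{m-k}b$ with $(b,p)=1$ for $1\le k\le m$ and treating $a=0$ separately as the first contribution to the main term.

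For each level $(k,b)$ I apply Poisson summation in $x$ after partitioning by residue modulo $p^k$. Because $p\ge 5$ and $(b\lambda_i,p)=1$, every complete quadratic Gauss sum of modulus $p^k$ evaluates in closed form, yielding
\[
T_i(p^{m-k}b)=\frac{N\,\varepsilon_{p^k}}{\sqrt{p^k}}\left(\frac{b\lambda_i}{p^k}\right)\sum_{y\in\mathbb{Z}}\hat\Phi\!\left(\frac{Ny}{p^k}\right)e\!\left(\frac{-\overline{4b\lambda_i}\,y^2}{p^k}\right),
\]
plus an analogous contribution from the $p\mid x$ piece of the M\"obius decomposition. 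Collecting the $\vec y=\vec 0$ terms across all levels $k$ together with the $a=0$ piece reproduces, via Hensel's lemma, the local density $B_p(Q)/q$ of solutions modulo $p^m$; multiplying by $\hat\Phi(0)^n N^n$ recovers the stated main term.

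The remainder has the shape
\[
\sum_{k=1}^{m}\frac{N^n\varepsilon_{p^k}^{\,n}}{q\,p^{kn/2}}\sum_{\vec y\ne\vec 0}\prod_{i=1}^{n}\hat\Phi\!\left(\frac{Ny_i}{p^k}\right)K_n(\lambda_{n+1},M_{\vec y};p^k),
\]
with $M_{\vec y}:=\sum_i\overline{\lambda_i}\,y_i^2$ and
\[
K_n(\alpha,\beta;p^k):=\sum_{\substack{b\bmod p^k\\(b,p)=1}}\left(\frac{b}{p^k}\right)^{\!n}e\!\left(\frac{-b\alpha-\overline{4b}\,\beta}{p^k}\right),
\]
a Sali\'e-type (for $n$ odd) or Kloosterman-type (for $n$ even) sum. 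A useful preliminary lemma is that $K_n=0$ whenever $0<v_p(M_{\vec y})<k$: splitting $b=b_0+p^{k-j}d$ with $j:=v_p(M_{\vec y})$, the $d$-sum collapses to $\sum_{d\bmod p^j}e(-d\lambda_{n+1}/p^j)=0$ because $(\lambda_{n+1},p)=1$. On the surviving strata, Weil-type bounds and explicit Gauss/Ramanujan evaluations yield $|K_n|\ll\sqrt{p^k}$. \emph{The principal obstacle}---absent in the homogeneous case of \cite{BaBaHa}---is the extra linear phase $e(-b\lambda_{n+1}/p^k)$, which prevents the Sali\'e sum from collapsing cleanly and forces one to rely on the $\sqrt{p^k}$ Weil bound; combined with the $(p^k/N)^n$ length of the $\vec y$-sum this gives an overall error of order $p^{k(n+1)/2}/q$, beating the main term $N^n/q$ only for $N\gg q^{1/2+1/(2n)+\varepsilon}$. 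To recover the uniform exponent $1/2+\varepsilon$ I would perform a second Poisson summation in $\vec y$, exploiting the $p$-adic stationary-phase evaluation of $K_n$ for $k\ge 2$ to extract quadratic-form cancellation among the frequencies $\vec y$. The threshold $n\ge 6$ will emerge from the final balance between these two Poisson steps, while the hypothesis $p\ge 5$ is used at the explicit quadratic Gauss sum evaluations.
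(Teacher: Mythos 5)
Your set-up through the evaluation of the complete quadratic Gauss sums and the reduction to the Kloosterman/Sali\'e sums $K_n$ matches the paper, and you correctly isolate the bottleneck: the trivial (Weil) bound $|K_n|\ll p^{k/2}$ only gives $N\gg q^{1/2+1/(2n)+\varepsilon}$, so the inhomogeneous phase $e(-b\lambda_{n+1}/p^k)$ must be exploited further. Where the proposal breaks down is the fix. You propose ``a second Poisson summation in $\vec y$, exploiting the $p$-adic stationary-phase evaluation of $K_n$.'' After the explicit evaluation of $K_n$ (the paper's Proposition~\ref{Kloosterman}), the phase in $\vec y$ is
$e_{p^k}\bigl(\pm 2v(\vec y)\bigr)$ where $v(\vec y)$ is a $p$-adic square root of $\lambda_{n+1}M_{\vec y}$; this is not a polynomial in $\vec y$ but a $p$-adic algebraic function, and there is no routine multidimensional Poisson step that produces ``quadratic-form cancellation'' from it. You do not say how to carry this out, and it is precisely the hard point -- as stated it is a gap, not a step.

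What the paper actually does at this stage is different and is worth recording. Instead of a second Poisson in all $n$ coordinates, it groups the $\vec l$-sum (your $\vec y$) by the level sets $\tilde Q(\vec l)=k$ of the dual positive definite form, thereby introducing a weighted representation number $\tau_n(k)$ (see \eqref{taudef}). It then approximates $\tau_n(k)$ by a Kloosterman-circle-method asymptotic, Proposition~\ref{modHeath}, whose singular series satisfies $a_q(k)\ll q^{1-n/2}$ and depends only on $k\bmod q$. This reduces the remaining estimate to a \emph{single-variable} sum of the shape
$\sum_{0<k\le K,\ k\equiv b\ (c)} \sum_{u^2\equiv k\Lambda\ (p^s)}(\tfrac{u}{p^s})^{\mu}e_{p^s}(u)$,
which Proposition~\ref{1stprop} bounds by $p^{s/2}\log p^s$ via a Hensel lift $u=w+hp^{\lceil s/2\rceil}$ and a standard linear geometric-sum bound. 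The positive-definiteness hypothesis on $\lambda_1,\dots,\lambda_n$ is exactly what is needed to apply Proposition~\ref{modHeath}, and the threshold $n\ge 6$ comes from the convergence/truncation of the singular series in Lemma~\ref{1stlemma} (absolute convergence for $n\ge 7$, a truncation argument at $n=6$) -- not from a balance between two Poisson steps as you suggest. Similarly, $p\ge 5$ is not needed for the Gauss sum evaluations (which hold for all odd $p$); it is used to guarantee that $\mathcal{B}(Q)$ is non-empty via a Cauchy--Davenport argument. These two ingredients -- the circle-method asymptotic for $\tau_n(k)$ and the one-variable estimate for exponential sums over modular square roots along residue classes -- are the genuinely new content of the proof, and they are missing from your plan.
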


We point out that if $k=2$ and $q$ is a prime, then the result of Cochrane, Ostergaard and Spencer mentioned at the beginning of this paper requires the number of variables $n$ to be larger than $1/\varepsilon$ to establish the existence of a non-trivial solution of height $\max\{|x_1|,...,|x_n|\}\le q^{1/2+\varepsilon}$ to the congruence in question. 

Our second main result is the following. 

\begin{theorem} \label{thm1'}
Fix $\varepsilon>0$, $n\in\mathbb{N}$ with $n\ge 4$ and a prime $p\ge 3$. For a diagonal quadratic form
\begin{equation} \label{quadform}
Q(x_1,...,x_n)=\lambda_1x_1^2+\cdots+\lambda_nx_n^2 
\end{equation}
with integer coefficients $\lambda_1,...,\lambda_n$, set $A_p(Q):=\sharp\mathcal{A}(Q)/p^{n-1}$, where
\begin{align*}
\mathcal{A}(Q)&:=\left\{(x_1,...,x_{n})\in\mathbb{Z}^n:~0\leq x_1,...,x_{n}\leq p-1,\ (x_1,...,x_n)\not=(0,...,0), \ Q(x_1,...,x_n)\equiv 0 \bmod p\right\}.
\end{align*}
Let $\Phi:\mathbb{R}\rightarrow \mathbb{R}_{\ge 0}$ be a Schwartz class function.
Then as $m\rightarrow \infty$, for any quadratic form $Q(x_1,...,x_n)$ as given in \eqref{quadform} that is non-singular modulo $p$, i.e. satisfies $(\lambda_1\cdots \lambda_n,p)=1$, we have the asymptotic formula
\begin{equation*}
\sum\limits_{\substack{(x_1,...,x_{n})\in \mathbb{Z}^{n}\\ (x_1,...,x_n)\not\equiv (0,...,0)\bmod p\\ Q(x_1,...,x_n)\equiv 0 \bmod{q}\\
}}
\prod_{i=1}^{n}\Phi\left(\frac{x_i}{N}\right)\sim A_p(Q)\cdot
\hat{\Phi}(0)^{n}\cdot \frac{N^{n}}{q}
\end{equation*}
with $q=p^m$, provided that $N\ge  q^{1/2+\varepsilon}$. In particular, if $m$ is large enough, then any quadratic congruence $Q(x_1,...,x_n)\equiv 0\bmod{q}$ with $Q$ satisfying the above conditions has a solution $(x_1,...,x_n)$ with $\max\{|x_1|,...,|x_n|\}\le q^{1/2+\varepsilon}$.  
\end{theorem}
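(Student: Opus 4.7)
The plan is to refine the additive-character / Poisson-summation approach used to establish Theorem \ref{MT2}, exploiting the diagonal shape of $Q$ so that every estimate is uniform in the coefficients $\lambda_1,\dots,\lambda_n$. I would first detect the congruence through
\[
\mathbf{1}_{Q(\mathbf x)\equiv 0\bmod q}=\frac{1}{q}\sum_{a\bmod q}e\!\left(\frac{aQ(\mathbf x)}{q}\right),
\]
split the $a$-sum according to the level $p^{m-\ell}=\gcd(a,q)$ for $0\le\ell\le m$, and remove the primitivity condition $(x_1,\dots,x_n)\not\equiv\mathbf 0\bmod p$ by inclusion--exclusion against the tuples with $p\mid x_i$ for every $i$; after substituting $x_i=py_i$, the latter collapses to the same problem modulo $p^{m-2}$ and is therefore amenable to induction on $m$.

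At each level $\ell\ge 1$, the diagonal shape of $Q$ allows Poisson summation to be applied to each variable independently. Partitioning into residue classes mod $p^\ell$ yields
\[
\sum_{x_i\in\mathbb{Z}}e\!\left(\frac{b\lambda_i x_i^2}{p^\ell}\right)\Phi\!\left(\frac{x_i}{N}\right)=\frac{N}{p^\ell}\sum_{h_i\in\mathbb{Z}}\hat\Phi\!\left(\frac{Nh_i}{p^\ell}\right)\sum_{c\bmod p^\ell}e\!\left(\frac{b\lambda_ic^2+ch_i}{p^\ell}\right),
\]
and completing the square in the innermost complete sum --- legitimate because $p$ is odd and $(b\lambda_i,p)=1$ --- evaluates it explicitly as a constant of modulus $p^{\ell/2}$ times the phase $e(-\overline{4b\lambda_i}h_i^2/p^\ell)$, with Jacobi-symbol twists in $b\lambda_i$. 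Forming the product over $i=1,\dots,n$, the $\lambda_i$-dependence inside the phase collapses into a single quadratic form $H(\mathbf h):=\sum_i\overline{\lambda_i}h_i^2$, and the $b$-dependence factors as $\bigl(\tfrac{b}{p^\ell}\bigr)^n e(-\overline{4b}H(\mathbf h)/p^\ell)$. Summation over $b\bmod p^\ell$ coprime to $p$ is therefore a Ramanujan sum when $n$ is even and a Salié sum when $n$ is odd, in either case admitting a square-root-type bound whose implied constant depends only on $p$.

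The resulting error sum is arranged as a geometric-type series in the level $\ell$, with each term controlled by combining the Gauss-sum saving $p^{-\ell n/2}$, the Ramanujan/Salié bound, the rapid decay of $\hat\Phi$ (which effectively truncates $|h_i|\ll p^\ell N^{-1}q^{\varepsilon}$), and a lattice-point count for the congruence $H(\mathbf h)\equiv 0\bmod p^{\ell-1}$ that pins down the support of the Ramanujan/Salié sum. Provided $n\ge 4$ and $N\ge q^{1/2+\varepsilon}$, these ingredients combine to yield a total error of $o(N^n/q)$. The frequency $\mathbf h=\mathbf 0$ at every level, assembled with the $\ell=0$ contribution, produces the main term $A_p(Q)\hat\Phi(0)^n N^n/q$, whose density factor is identified with $\sharp\mathcal A(Q)/p^{n-1}$ via Hensel's lemma exactly as in \cite{BaBaHa}.

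The principal difficulty is securing the error bound \emph{uniformly} in $\lambda_1,\dots,\lambda_n$ as they are allowed to grow with $m$; this is where $n\ge 4$ is essential, since the Gauss-sum and Ramanujan/Salié cancellations balance against the size $p^\ell/N$ of the dual box only when there are enough variables, whereas the $n=3$ case requires the substantially weaker exponent $11/18$ of Theorem \ref{mainresult2BaHa}. The ``in particular'' claim then follows at once from the asymptotic: non-singularity of $Q$ mod $p$ together with $n\ge 4$ forces $\mathcal A(Q)\neq\emptyset$ (e.g.\ by Chevalley--Warning, since $n>\deg Q$), so $A_p(Q)>0$ and the positivity of the main term guarantees a non-trivial integer solution with $\max_i|x_i|\le q^{1/2+\varepsilon}$ for all sufficiently large $m$.
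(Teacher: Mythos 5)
Your outline matches the route that the paper takes for the \emph{inhomogeneous} case (Theorem \ref{thm1}) and the alternative indicated in Remark (iv), not the route actually followed here: the paper's proof of Theorem \ref{thm1'} does not re-run Poisson summation and Gauss/Ramanujan/Sali\'e evaluations, but instead cites from \cite{BaBaHa} the ready-made error bound
\[
U\ll\frac{N^{n}}{p^{m(n+1)}}\sum_{r=0}^{m-2}p^{(m+r)n/2+(m-r)}\sum\limits_{\substack{{\bf l}\in \mathbb{Z}^{n}\\|l_1|,...,|l_{n}|\le L_r\\\tilde{Q}({\bf l})\equiv 0 \bmod p^{m-r-2}}} 1+O(1),
\]
and reduces everything to estimating the inner lattice count. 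That difference by itself is fine; the problem is that you stop exactly where the real work begins.

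The genuine gap is the dual lattice-point count. You write that the Gauss-sum saving, the Ramanujan/Sali\'e bound, the decay of $\hat\Phi$, and ``a lattice-point count for the congruence $H(\mathbf h)\equiv 0\bmod p^{\ell-1}$'' combine to give an error $o(N^n/q)$ for $N\ge q^{1/2+\varepsilon}$, but you never say how to bound that count. The obvious bound (fix $l_2,\dots,l_n$; then $l_1$ is determined up to $O(1)$ choices since the box side $L_r$ is smaller than $p^{(m-r-2)/2}$) gives
$\sharp\{\mathbf l:|l_j|\le L_r,\,\tilde Q(\mathbf l)\equiv 0\bmod p^{m-r-2}\}\ll L_r^{n-1}$,
and feeding $L_r^{n-1}$ into the error term only yields $U\ll N p^{m(n/2-1)}$, which beats the main term $N^n/p^m$ only for $N\gg q^{n/(2(n-1))}$ --- e.g.\ $q^{2/3}$ when $n=4$, far short of $q^{1/2+\varepsilon}$. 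What the proof actually needs is the sharper bound $\ll L_r^{n-2+\varepsilon}$, and obtaining it \emph{uniformly in the coefficients} is the crux. The paper does this in Lemma \ref{Sbound}: split $\tilde Q$ into two binary pieces, apply Cauchy--Schwarz over the common residue $a\bmod c$, and bound the resulting fourth moment by factoring the difference-of-squares congruence $\alpha_1(l_1-m_1)(l_1+m_1)\equiv\alpha_2(m_2-l_2)(m_2+l_2)\bmod p^s$ and invoking the divisor bound. None of this is visible in, or implied by, your sketch; you have assumed the conclusion of the key lemma.

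Two smaller inaccuracies: (a) the ``Ramanujan sum'' at level $\ell$ is $c_{p^{\ell}}(A)$, which is \emph{not} square-root size --- it is supported on $p^{\ell-1}\mid A$ and has magnitude $\asymp p^{\ell-1}$ there --- so the phrase ``square-root-type bound'' is misleading and, if used literally in the error estimate, would give the wrong exponent; (b) in Theorem \ref{thm1'} the side condition is $(x_1,\dots,x_n)\not\equiv\mathbf 0\bmod p$, not $(x_1\cdots x_n,p)=1$, so the inclusion--exclusion step you describe is with respect to the single event $p\mid x_i$ for all $i$ rather than separate coprimality conditions; this changes the explicit shape of the Gauss-sum differences compared with the analysis you have in mind from Theorem \ref{thm1}. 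Your Chevalley--Warning argument for $\mathcal A(Q)\neq\emptyset$ is correct.
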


\begin{remark} (i) In Theorem \ref{thm1}, we have assumed the quadratic form $\lambda_1x_1^2+\cdots+\lambda_nx_n^2$ to be positive definite. It should be possible to drop this condition. However, this would depend on an asymptotic formula for the weighted number of representations by {\it indefinite} quadratic forms which is currently not at our disposal. \medskip\\
(ii) Moreover, in Theorem \ref{thm1}, we have assumed the Fourier transform of our Schwartz class weight function $\Phi$ to be a bump function, which is a commonly used assumption in analytic number theory. We note that we also assumed $\Phi$ to be real-valued and non-negative on $\mathbb{R}$. It is easy to construct a function $\Phi$ with these properties: Take any bump function $f:\mathbb{R}\rightarrow \mathbb{R}_{\ge 0}$ and form the convolution $g=f\ast f^{\ast}$, where $f^{\ast}(x):=f(-x)$. This function $g$ is a bump function as well and its Fourier transform equals $|\hat{f}|^2$. The function $\Phi=|\hat{f}|^2$ is in the Schwartz class and non-negative, and its Fourier transform satisfies $\hat\Phi(x)=g(-x)=g(x)$ and hence is a bump function.  To remove the condition that $\hat{\Phi}$ is a bump function from Theorem \ref{thm1}, we would need to be able to establish a more general version of  \cite[Theorem 1.1]{Jon} (resp., Proposition \ref{modHeath} below) for Schwartz class instead of bump functions. This would require to prove an analogue of \cite[Theorem 5.13]{Jon} for Schwartz class weight functions, which is a non-trivial task. We leave this to future research. 
\medskip\\ (iii) The set $\mathcal{A}(Q)$ in Theorem \ref{thm1'} above is non-empty as a consequence of \cite[parts (iii) and (iv) of Theorem 6 on page 37]{Serre}. We note that under the conditions of our Theorem \ref{thm1'}, $\varepsilon=1=(-1,-1)$, in Serre's notation. \medskip\\
(iv) The set $\mathcal{B}(Q)$ in Theorem \ref{thm1} turns out to be non-empty as well, but we will establish this by a different argument below. We note that in Theorem \ref{thm1}, the condition $(x_1,...,x_n)\not\equiv (0,....0)\bmod{p}$ in Theorem \ref{thm1'} is replaced by the stronger condition $(x_1\cdots x_n,p)=1$. This stronger condition helps us in our later calculations because it results in taking differences of Gauss sums which often cancel each other, as seen in subsection 3.3.1 below. To see that $\mathcal{B}(Q)$ is non-empty, we note that if $p\ge 5$, then each of the sets 
$$
S_1:=\left\{\lambda_1x_1^2+\cdots +\lambda_{n-2}x_{n-2}^2 \in \mathbb{F}_p: (x_1,...,x_{n-2})\in \left(\mathbb{F}_p^{\times}\right)^{n-2}\right\}
$$
and 
$$
S_2:=\left\{\lambda_{n+1}-\lambda_{n-1}x_{n-1}^2-\lambda_{n}x_{n}^2 \in \mathbb{F}_p: (x_{n-1},x_{n})\in \left(\mathbb{F}_p^{\times}\right)^{2}\right\}
$$
contains at least $(p+1)/2$ elements, and hence their intersection is non-empty, which implies that $\mathcal{B}(Q)$ is non-empty. Indeed, the set $\left\{\lambda_ix_i^2: x_i\in \mathbb{F}_p^{\times}\right\}$ equals either the set of quadratic residues or that of quadratic non-residues modulo $p$, which both have a cardinality of $(p-1)/2$. By the Cauchy-Davenport theorem, the sum of two such sets has at least $p-2$ elements, which is greater than or equal to $(p+1)/2$ if $p\ge 5$.  
\end{remark}
The proof of Theorem \ref{thm1} starts with detecting the congruence condition $Q(x_1,...,x_n)\equiv 0 \bmod{q}$ using additive characters, followed by an application of the Poisson summation formula. We isolate the main term, which arises from the zero frequency. The error term contains quadratic Gauss sums. After their evaluation, we are led to Kloosterman and Sali\'e sums, which we also evaluate explicitly. This leads us to  expressions which contain square roots modulo prime powers. They essentially take the form
$$
\sum\limits_{\substack{0<k\le K\\ (k,p)=1}} \tau_n(k)\cos\left(\frac{2\pi \sqrt{k\Lambda}}{p^s}\right),
$$    
where $\Lambda$ is coprime to $p$, $\sqrt{k\Lambda}$ denotes one of the two square roots of $k\Lambda$ modulo $p^s$, if existent (otherwise, the summand is omitted), and $\tau_n(k)$ denotes a weighted number of integral solutions $(x_1,...,x_n)$ to a Diophantine equation of the form
$$
\tilde{Q}(x_1,...,x_n)=k,
$$ 
$\tilde{Q}$ being a dual quadratic form. We approximate $\tau_n(k)$ using a variant of the circle method and are then led to sums of terms of the form 
$\cos\left(2\pi \sqrt{k\Lambda}/p^s\right)$ with $k$ running over residue classes. We establish that these sums are bounded by essentially the square root of the modulus $p^s$ using a Hensel-type argument and a standard bound for linear exponential sums. This allows us to bound the error term satisfactorily. 

The proof of Theorem \ref{thm1'} uses the same basic idea as the proof of Theorem \ref{mainresult2BaHa} in \cite{BaHa} but is much simpler as a result of the increased number of variables. It begins with bounding the error term by a dual count of solutions of quadratic congruences and then makes use of the Cauchy-Schwarz inequality. This leads us to counting solutions of linear congruences, which turns out easy if the number of variables is greater than or equal to 4.   

We point out that a possible alternative approach to the above problems consists of writing the congruence $Q({\bf x})\equiv 0\bmod{q}$ as an equation $Q({\bf x})=lq$, detecting weighted solutions of this equation directly using the circle method and summing up over $l$ in the relevant range. However, isolating main terms in the simple forms as they appear in our Theorems \ref{thm1} and \ref{thm1'} above then requires much more work. In contrast, the approach in this paper yields these main terms in a natural and easy way, and for the error terms we obtain essentially optimal bounds.  \\ \\
{\bf Acknowledgements.} The authors would like to thank the anonymous referees for their valuable comments. They would also like to thank the Ramakrishna Mission Vivekananda Educational and Research Institute for providing excellent working conditions. The second-named author would like to thank CSIR, Govt. of India for financial support in the form of a Junior Research Fellowship under file number 09/934(0015)/2019-EMR-I. \\ \\
{\bf Data availability statement.} No data are associated to this article.\\ \\
{\bf Conflict of interest statement.} The authors declare no conflict of interest regarding this article. 

\section{Preliminaries} \label{prelim}
\subsection{Notations}
Throughout this article, we use the following notations.
\begin{itemize}
\item We write
$$
e(z)=e^{2\pi i z}
$$
if $z\in \mathbb{R}$. 
\item For $q\in \mathbb{N}$ and $a\in \mathbb{Z}$, we write
$$
e_q(a)=e\left(\frac{a}{q}\right).
$$
This is an additive character modulo $q$. 
\item If $c$ is odd, then $\left(\frac{a}{c}\right)$ denotes the Jacobi symbol.
\item If $c$ is odd, then we set 
$$\epsilon_c:=\begin{cases}
        1,&\text{if }c\equiv1\bmod4,\\
        i,&\text{if }c\equiv3\bmod4.
    \end{cases}
$$ 
\item For $a,b\in \mathbb{Z}$ and $c\in \mathbb{N}$, we denote the generalized quadratic Gauss sum by
$$
G(a,b,c):=\sum_{n=0}^{c-1}e_c(an^2+bn).
$$ 
\item For $a,b\in \mathbb{Z}$ and $c\in \mathbb{N}$, we denote the Kloosterman sum by 
$$
K_0(a,b,c):=\sum\limits_{\substack{n=0\\ (n,c)=1}}^{c-1} e_c\left(a\overline{n}+bn\right)
$$
where for $n$ coprime to $c$, $\overline{n}$ denotes a multiplicative inverse of $n$ modulo $c$, i.e., $n\overline{n} \equiv 1 \bmod{c}$. If $c$ is odd, we denote the Sali\'e sum by
$$
K_1(a,b,c):=\sum\limits_{\substack{n=0\\ (n,c)=1}}^{c-1} \left(\frac{n}{c}\right) e_c\left(a\overline{n}+bn\right).
$$
\item We abbreviate $n$-dimensional vectors $(x_1,...,x_n)$, $(y_1,...,y_n)$, $(k_1,...,k_n)$ etc. by the corresponding bold letters ${\bf x}$, ${\bf y}$, ${\bf k}$ etc.. 
\item Following usual convention, $\varepsilon$ stands for an arbitrarily small positive number which may change from line to line. All implied $O$-constants are allowed to depend on $\varepsilon$.  
\end{itemize}

\subsection{Poisson summation formula}
We will use the following variant of the Poisson summation formula for residue classes. 

\begin{proposition} \label{Poisson}
Let $q\in \mathbb{N}$, $a\in \mathbb{Z}$, $N>0$ and $\Phi: \mathbb{R} \rightarrow \mathbb{C}$  be a Schwartz class function. Then 
$$
\sum\limits_{\substack{m\in \mathbb{Z}\\ m\equiv a \bmod{q}}} \Phi\left(\frac{m}{N}\right) =\frac{N}{q}\sum\limits_{n\in \mathbb{Z}} \hat{\Phi}\left(\frac{nN}{q}\right) e\left(\frac{na}{q}\right),
$$ 
where $\hat\Phi :\mathbb{R}\rightarrow \mathbb{C}$ is the Fourier transform of $\Phi$, defined by
$$
\hat{\Phi}(y):=\int\limits_{\mathbb{R}} \Phi(x)e(-xy){\rm d}x.
$$
\end{proposition}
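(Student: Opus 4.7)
The plan is to reduce the identity to the classical Poisson summation formula $\sum_{k\in\mathbb{Z}} f(k)=\sum_{n\in\mathbb{Z}}\hat{f}(n)$, valid for any Schwartz function $f:\mathbb{R}\to\mathbb{C}$. First I would parametrize the residue class $m\equiv a\bmod{q}$ by writing $m=a+qk$ with $k$ ranging freely over $\mathbb{Z}$. The left-hand side then becomes $\sum_{k\in\mathbb{Z}} f(k)$, where
$$f(x):=\Phi\!\left(\frac{a+qx}{N}\right).$$
Since $\Phi$ is Schwartz and $f$ is obtained from $\Phi$ by the affine change of variable $x\mapsto (a+qx)/N$, the function $f$ is again Schwartz, so the classical Poisson identity applies without further justification.

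Next I would compute the Fourier transform $\hat{f}(n)$ explicitly via the substitution $u=(a+qx)/N$, i.e.\ $x=(Nu-a)/q$ and $dx=(N/q)\,du$:
$$\hat{f}(n)=\int_{\mathbb{R}}\Phi\!\left(\frac{a+qx}{N}\right)e(-nx)\,dx=\frac{N}{q}\,e\!\left(\frac{na}{q}\right)\int_{\mathbb{R}}\Phi(u)\,e\!\left(-\frac{nN}{q}\,u\right)du=\frac{N}{q}\,e\!\left(\frac{na}{q}\right)\hat{\Phi}\!\left(\frac{nN}{q}\right).$$
Substituting this back into the classical Poisson identity yields exactly the stated formula.

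There is no genuine obstacle here: the only bookkeeping point is the phase $e(na/q)$, which arises from the shift by $a$ in the change of variable. The proposition is entirely standard and is recorded mainly to fix the normalization of $\hat{\Phi}$ used throughout the subsequent harmonic analysis. Absolute convergence of both sides is automatic from the Schwartz hypothesis on $\Phi$, which dominates $|f(k)|$ and $|\hat{f}(n)|$ by rapidly decaying sequences.
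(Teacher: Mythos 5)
Your proof is correct and takes essentially the same approach as the paper, which simply invokes the classical Poisson summation formula $\sum_{k}\Psi(k)=\sum_{n}\hat{\Psi}(n)$ together with a linear change of variables. Your write-up just makes that change of variables explicit, correctly producing the phase factor $e(na/q)$ and the scaling $N/q$, and the remark on absolute convergence is the standard justification.
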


\begin{proof} This follows using a linear change of variables from the ordinary Poisson summation formula which asserts that 
$$
\sum\limits_{m\in \mathbb{Z}} \Psi(m)= \sum\limits_{n\in \mathbb{Z}} \hat{\Psi}(n)
$$
for all Schwartz class functions $\Psi : \mathbb{R} \rightarrow \mathbb{C}$. 
For background on the Poisson summation formula see \cite{StSh}. 
\end{proof}

\subsection{Results on Gauss sums}
The following proposition gives well-known properties of generalized quadratic Gauss sums.

\begin{proposition} \label{Gaussprop} Let $a,b\in \mathbb{Z}$ and $c\in \mathbb{N}$. Then we have the following.\medskip\\
{\rm (i)}  If $(a,c)|b$, then 
$$
G(a,b,c)=(a,c) \cdot G\left(\frac{a}{(a,c)},\frac{b}{(a,c)},\frac{c}{(a,c)}\right). 
$$
{\rm (ii)}  If $(a,c)\nmid b$, then $G(a,b,c)=0$.\medskip\\
{\rm (iii)} If $(a,c)=1$ and $c$ is odd,  then 
$$
G(a,b,c)=e\left(-\frac{\overline{4a}b^2}{c}\right)G(a,0,c), 
$$
where for $x\in \mathbb{Z}$ coprime to $c$, $\overline{x}$ denotes a multiplicative inverse of $x$ modulo $c$, i.e., $x\overline{x} \equiv 1 \bmod{c}$. \medskip\\ 
{\rm (iv)} If $(a,c)=1$ and $c$ is odd, then 
$$
G(a,0,c)=\left(\frac{a}{c}\right)\epsilon_cc^\frac{1}{2}.
$$ 
{\rm (v)} If $(a,c)=1$, then 
$$
|G(a,0,c)|\le 2\sqrt{c}.
$$
\end{proposition}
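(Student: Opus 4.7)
The plan is to handle the five parts in order, since each reduces to a classical manipulation.

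For parts (i) and (ii), I would set $d=(a,c)$ and exploit the shift $n\mapsto n+c/d$. When $d\mid b$, dividing numerator and denominator inside $e_c$ by $d$ shows that the summand has period $c/d$ in $n$, so summing over $0\le n<c$ yields $d$ identical copies of the shorter sum $G(a/d,b/d,c/d)$. When $d\nmid b$, the same shift leaves the summand multiplied by the nontrivial root of unity $e_d(b)$ while permuting the residues modulo $c$; invariance of the sum under re-indexing then forces $G(a,b,c)=0$.

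Part (iii) is the completion-of-the-square identity. Since $c$ is odd and $(a,c)=1$, both $\overline{2a}$ and $\overline{4a}$ exist modulo $c$ and
$$
an^2+bn\equiv a(n+\overline{2a}b)^2-\overline{4a}b^2\pmod{c}.
$$
Re-indexing $n\mapsto n-\overline{2a}b$ inside the complete sum pulls the factor $e_c(-\overline{4a}b^2)$ outside and leaves $G(a,0,c)$.

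Part (iv) is the classical evaluation of the quadratic Gauss sum, and is the one place where I would invoke a standard result rather than redo the whole argument. For $c=p$ an odd prime one first writes
$$
G(a,0,p)=1+\sum_{m=1}^{p-1}\left(1+\left(\frac{m}{p}\right)\right)e_p(am)=\left(\frac{a}{p}\right)G(1,0,p),
$$
and then invokes Gauss's evaluation $G(1,0,p)=\epsilon_p\sqrt{p}$. For general odd $c$, the Chinese Remainder Theorem factors $G(a,0,c)$ into a product of Gauss sums over the prime-power factors of $c$; each factor is evaluated by a Hensel-type lifting argument, and the Jacobi symbol $\left(\frac{a}{c}\right)$ together with the factor $\epsilon_c$ is reassembled using quadratic reciprocity. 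This is textbook material and I would simply cite a standard reference.

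For the bound in (v), I would start from
$$
|G(a,0,c)|^2=\sum_{n,m=0}^{c-1}e_c\bigl(a(n-m)(n+m)\bigr),
$$
substitute $k=n-m$, and evaluate the inner sum over $m$ using orthogonality; this collapses the double sum and gives $|G(a,0,c)|^2\le c$. Alternatively, the CRT factorization together with the exact values of $|G(a,0,p^e)|^2$ at each prime-power factor yields the same inequality. The only genuinely nontrivial step in the whole proposition is the Gauss sum evaluation in (iv); everything else is a one- or two-line manipulation.
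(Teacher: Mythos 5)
The paper's ``proof'' of this proposition is a single citation to Berndt--Evans--Williams, Chapter 1, so you have genuinely supplied the arguments that the paper omits. Your sketches for (i)--(iv) are correct and are the standard ones: the shift $n\mapsto n+c/d$ gives period $c/d$ when $d\mid b$ and yields the vanishing factor $e_d(b)\ne 1$ when $d\nmid b$; the completion of the square for odd $c$ uses the invertibility of $2a$ and $4a$; the reduction of $G(a,0,p)$ to $\bigl(\tfrac{a}{p}\bigr)G(1,0,p)$ via the identity $\sum_n e_p(an^2)=\sum_{m}\bigl(\tfrac{m}{p}\bigr)e_p(am)$ plus Gauss's determination of the sign is the canonical route, and CRT with quadratic reciprocity handles composite odd $c$.

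Part (v) is where there is a genuine gap. Carrying out your substitution $k=n-m$ and summing over $m\bmod c$, one gets
\begin{equation*}
|G(a,0,c)|^2=\sum_{k\bmod c}e_c(ak^2)\sum_{m\bmod c}e_c(2akm),
\end{equation*}
and the inner sum is $c$ precisely when $c\mid 2ak$. Since $(a,c)=1$, this happens only for $k\equiv 0$ when $c$ is odd, which indeed gives $|G|^2=c$. But if $c\equiv 0\bmod 4$ there are \emph{two} surviving classes, $k\equiv 0$ and $k\equiv c/2$, and because $e_c\bigl(a(c/2)^2\bigr)=e(ac/4)=1$ in that case, the computation actually produces $|G(a,0,c)|^2=2c$, not $\le c$. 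So the method you describe does not ``collapse'' to the stated inequality for all $c$; the true uniform bound it yields is $|G(a,0,c)|\le\sqrt{2c}$, with equality when $4\mid c$. (In fairness, the paper's part (v) as literally stated is also false for $c\equiv 0\bmod 4$: for $a=1$ and $4\mid c$ one has $G(1,0,c)=(1+i)\sqrt{c}$. None of this matters for the single place the paper invokes (v), the crude bound \eqref{aqk} on $a_q(k)$, where an extra constant factor $2^{n/2}$ is harmless, but your write-up should either restrict to odd $c$, state the bound as $\sqrt{2c}$, or treat the $2\bmod 4$ and $0\bmod 4$ cases explicitly.)
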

\begin{proof}
(i) Suppose that $(a,c)=d$ and $d|b$. Set $\tilde{a}:=a/d$, $\tilde{b}=b/d$ and $\tilde{c}=c/d$. Then we have  
$$
G(a,b,c)=\sum_{n=0}^{c-1}e_c(an^2+bn)=d\sum\limits_{n=0}^{\tilde{c}-1} e_{\tilde{c}}(\tilde{a}n^2+\tilde{b}n)=dG(\tilde{a},\tilde{b},\tilde{c}),
$$ 
establishing part (i). \medskip\\
(ii) Suppose that $(a,c)=d$ and $d\nmid b$. Set $\tilde{a}:=a/d$ and $\tilde{c}:=c/d$. Then we have 
$$
G(a,b,c)=\sum_{n=0}^{c-1}e_c(an^2+bn)=\sum\limits_{n=0}^{c-1} e_{\tilde{c}}\left(\tilde{a}n^2\right)e_c\left(bn\right)=\sum\limits_{n=0}^{\tilde{c}-1} e_{\tilde{c}}\left(\tilde{a}n^2\right)\sum\limits_{\substack{m=0\\ m\equiv n\bmod{\tilde{c}}}}^{c-1} e_c(bm)=0
$$
since 
$$
\sum\limits_{\substack{m=0\\ m\equiv n\bmod{\tilde{c}}}}^{c-1} e_c(bm)=\sum\limits_{k=0}^{d-1} e_c(b(k\tilde{c}+n))=e_c(bn)\sum\limits_{k=0}^{d-1} e_d(bk)=0,
$$
establishing part (ii).\medskip\\
(iii) Suppose that $(a,c)=1$ and $c$ is odd. Then quadratic completion gives
$$
G(a,b,c)=\sum_{n=0}^{c-1}e_c(an^2+bn)=e\left(-\frac{\overline{4}ab^2}{c}\right)\sum_{n=0}^{c-1} e_c\left(a(n+\overline{2a}b)^2\right)=e\left(-\frac{\overline{4}ab^2}{c}\right) \sum\limits_{n=0}^{c-1} e_c(an^2)=e\left(-\frac{\overline{4}ab^2}{c}\right) G(a,0,c),
$$
establishing part (iii). \medskip\\
Parts (iv) and (v) are consequences of \cite[Lemmas 7.12-15.]{GrKo}. 
\end{proof}
 
\subsection{Results on Kloosterman and Sali\'e sums}
We shall use the following result to show the vanishing of Kloosterman and Sali\'e sums in certain instances.
\begin{proposition}\label{cochrane}
    Let $p\ge 3$ be a prime, $n\geq 2$ be a natural number and $f = F_1/F_2$ be a rational function where $F_1, F_2 \in\mathbb{Z}[x]$. For a polynomial $G$ over $\mathbb{Z}$, let $ord_p(G)$ be the largest power of $p$ dividing all of the coefficients of $G$, and for a rational function $g = G_1/G_2$ with $G_1$ and $G_2$ polynomials over $\mathbb{Z}$, let $ord_p(g) := ord_p(G_1)-ord_p(G_2)$. Set $$r := ord_p(f')$$ 
    and $$S_\alpha (f; p^n) :=\sum_{\substack{x=1\\x\equiv \alpha\bmod p}}^{p^n} e_{p^n} (f(x)),$$ 
    where $\alpha\in\mathbb{Z}$. If $r\leq n-2$, $(F_2(\alpha), p) = 1$ and $p^{-r}f'(\alpha)\not\equiv 0 \bmod p $, then we have $S_\alpha (f; p^n)=0.$
\end{proposition}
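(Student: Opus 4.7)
The plan is to exhibit a shift of the summation variable that preserves the summation set but multiplies every summand by a common non-trivial $p$-th root of unity, forcing the sum to vanish. The natural candidate is the shift $x \mapsto x + p^{n-r-1}$: this is a bijection on the set $\{x \bmod p^n : x \equiv \alpha \bmod p\}$ exactly when $p \mid p^{n-r-1}$, i.e.\ $n - r - 1 \ge 1$, which is precisely the hypothesis $r \le n - 2$.

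The main technical step is to evaluate $f(x + p^{n-r-1}) - f(x) \bmod p^n$ for $x \equiv \alpha \bmod p$. The hypothesis $(F_2(\alpha), p) = 1$ forces $\operatorname{ord}_p(F_2) = 0$, so $F_2$ is a $p$-adic unit at every point of the summation set and $f$ admits a $p$-adic Taylor expansion at $x$ with coefficients $f^{(k)}(x)/k! \in \mathbb{Z}_p$. A short induction on $k$, using $\operatorname{ord}_p(H') \ge \operatorname{ord}_p(H)$ for integer polynomials $H$ and the fact that the denominator of $f^{(k)}$ is a power of $F_2$, yields $\operatorname{ord}_p(f^{(k)}) \ge \operatorname{ord}_p(f') = r$ and hence $v_p(f^{(k)}(x)) \ge r$ for every $x \equiv \alpha \bmod p$. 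The $k$-th Taylor contribution to $f(x + p^{n-r-1}) - f(x)$ therefore has $p$-adic valuation at least
\[
r + k(n-r-1) - v_p(k!).
\]
Using $r \le n - 2$ and Legendre's formula $v_p(k!) = (k - s_p(k))/(p-1)$, a direct verification shows this lower bound is $\ge n$ for every $k \ge 2$ and $p \ge 3$ (the case $k = 2$ is tight and uses $r \le n-2$ in an essential way), so all Taylor terms of order $\ge 2$ vanish modulo $p^n$.

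What remains is the linear term: $f(x + p^{n-r-1}) \equiv f(x) + f'(x)\, p^{n-r-1} \bmod p^n$. Expanding $f'(x)$ about $\alpha$ and using $v_p(f''(\alpha)) \ge r$ together with $v_p(x - \alpha) \ge 1$ gives $f'(x) \equiv f'(\alpha) \bmod p^{r+1}$, so the residue $u := p^{-r} f'(\alpha) \bmod p$, nonzero by hypothesis, also equals $p^{-r} f'(x) \bmod p$. Therefore $f'(x)\, p^{n-r-1} \equiv u\, p^{n-1} \bmod p^n$ and
\[
e_{p^n}\bigl(f(x + p^{n-r-1})\bigr) = e_p(u)\cdot e_{p^n}\bigl(f(x)\bigr).
\]
Substituting the shift into the defining sum yields $S_\alpha(f; p^n) = e_p(u)\, S_\alpha(f; p^n)$; since $p \nmid u$ we have $e_p(u) \ne 1$, so $S_\alpha(f; p^n) = 0$.

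The main obstacle I expect is the $p$-adic bookkeeping in the second paragraph --- both the bound $v_p(f^{(k)}(x)) \ge r$ (derived from $\operatorname{ord}_p(f^{(k)}) \ge r$ and the denominator being a power of $F_2$) and the elementary numerical verification that $k(n-r-1) - v_p(k!) \ge n - r$ for every $k \ge 2$. These are precisely the points where the coupled hypotheses $p \ge 3$ and $r \le n - 2$ are consumed, and the argument is tight at $k = 2$ and $r = n - 2$.
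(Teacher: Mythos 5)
Your proof is correct. Note, however, that the paper does not actually prove this proposition --- it simply cites Cochrane and Zheng, \cite{Coch}[Theorem~3.1(iii)], so there is no internal proof to compare against. Your argument is the standard ``non\nobreakdash-stationary phase'' proof of such vanishing: shift the summation variable by $p^{n-r-1}$ (a bijection on $\{x \bmod p^n : x \equiv \alpha \bmod p\}$ precisely because $r \le n-2$), use the $p$-adic Taylor expansion together with $v_p(f^{(k)}(x)) \ge r$ to kill all terms of order $\ge 2$ modulo $p^n$, and observe that the surviving linear term contributes the common phase $e_p\bigl(p^{-r}f'(\alpha)\bigr) \ne 1$. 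This is, to my knowledge, essentially the argument in Cochrane--Zheng as well. Your handling of the two technical points is sound: the induction $\operatorname{ord}_p(f^{(k)}) \ge r$ via $P_{k+1} = P_k' F_2 - (k+1) P_k F_2'$ and $\operatorname{ord}_p(F_2) = 0$, and the numerical inequality $(k-1)(n-r-1) \ge v_p(k!) + 1$ for $k \ge 2$, $p \ge 3$, which is indeed tight at $k=2$, $r=n-2$. One small cosmetic remark: when expanding $f'(x)$ about $\alpha$ to get $f'(x) \equiv f'(\alpha) \bmod p^{r+1}$, you should note that \emph{all} terms $f^{(j+1)}(\alpha)(x-\alpha)^j/j!$ with $j \ge 1$ have valuation $\ge r+1$ (not only the $j=1$ term); this holds by the same calculation and is worth saying explicitly, but it is not a gap.
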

\begin{proof}
    This is a part of \cite[Theorem 3.1(iii)]{Coch}.
\end{proof}
The following proposition gives evaluations of Kloosterman and Sali\'e sums.
\begin{proposition} \label{Kloosterman} Let $a,b\in \mathbb{Z}$, $c\in \mathbb{N}$ odd, $p$ be an odd prime and $n\in \mathbb{N}$. \medskip\\
{\rm (i)} If $c=p^n$ with $n\geq2$ and $p\nmid ab$, then 
$$ K_0(a,b,c)= 2\left(\frac{v}{c}\right)c^\frac{1}{2}\Re\left(\epsilon_ce_c(2v)\right), $$ 
provided that $v$ is a solution of the congruence $v^2\equiv ab \bmod{c}$. If this congruence is not solvable ${\rm (}$which happens if and only if $\left(\frac{ab}{p}\right)=-1${\rm )}, then 
$K_0(a,b,c)=0$. \medskip\\
{\rm (ii)} If $(ab,c)=1$, then 
$$ K_1(a,b,c)= \epsilon_c \left(\frac{b}{c}\right)c^\frac{1}{2}\sum_{v^2\equiv ab\bmod c}e_c(2v). $$
{\rm (iii)} If  $c=p^n$ with $n\geq2$, $p|a$ and $p\nmid b$, then 
$K_0(a,b,c)=0=K_1(a,b,c)$. 
\end{proposition}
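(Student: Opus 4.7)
The plan is to treat the three parts in ascending order of difficulty, combining the stationary-phase vanishing supplied by Proposition~\ref{cochrane} with a Hensel-lift / Taylor expansion of the surviving sums. Throughout I write $f(m) = a\overline m + bm$ and decompose both $K_0$ and $K_1$ as sums over $\alpha \bmod p$ with $(\alpha, p) = 1$ of $S_\alpha(f; p^n)$, with an extra Jacobi factor for $K_1$ that is constant on each such coset.

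For part (iii), apply Proposition~\ref{cochrane} directly. Viewing $f$ as the rational function $a/m + bm$, its derivative $f'(m) = (bm^2 - a)/m^2$ satisfies $\mathrm{ord}_p(f') = 0$ because $p \nmid b$, so $r = 0 \le n - 2$ whenever $n \ge 2$, and $f'(\alpha) \equiv b \not\equiv 0 \bmod p$ for every $\alpha$ coprime to $p$ since $p \mid a$. Hence $S_\alpha(f; p^n) = 0$ and summation yields $K_0 = K_1 = 0$.

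For part (i), the same analysis shows that if $\left(\frac{ab}{p}\right) = -1$ the congruence $f'(\alpha) \equiv 0 \bmod p$ is unsolvable and $K_0 = 0$. Otherwise Hensel's lemma supplies $w$ with $bw^2 \equiv a \bmod p^n$; setting $v := bw$ gives $v^2 \equiv ab \bmod p^n$ and $f(w) \equiv 2v \bmod p^n$, and the critical residues are $\pm w$. Refining to $m = m_0 + p^{\lceil n/2 \rceil} h$ and using
\[
\overline{m_0 + p^{\lceil n/2 \rceil} h} \equiv \overline{m_0} - p^{\lceil n/2 \rceil} h\, \overline{m_0}^{\,2} \bmod p^n
\]
(the quadratic term drops because $2\lceil n/2 \rceil \ge n$), the $h$-sum contributes $p^{\lfloor n/2 \rfloor}$ exactly on the locus $f'(m_0) \equiv 0 \bmod p^{\lfloor n/2 \rfloor}$. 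In the even case $n = 2s$ this already pins $m_0$ modulo $p^s$, and the two critical points contribute $p^s(e_c(2v) + e_c(-2v))$, matching $2\left(\frac{v}{c}\right)c^{1/2}\Re(\epsilon_c e_c(2v))$ since $\epsilon_{p^{2s}} = \left(\frac{v}{p^{2s}}\right) = 1$. In the odd case $n = 2s+1$, each $\pm w \bmod p^s$ has $p$ further lifts to $\bmod p^{s+1}$; Taylor-expanding once more along $m_0 = w + p^s \ell$ introduces a quadratic term $p^{2s}\ell^2 a\overline w^{\,3}$, producing an inner Gauss sum of modulus $p$ that Proposition~\ref{Gaussprop}(iii)--(iv) evaluates as $\epsilon_p \left(\frac{a\overline w^{\,3}}{p}\right) p^{1/2}$. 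Under $\left(\frac{ab}{p}\right) = 1$ the Jacobi character simplifies to $\left(\frac{v}{c}\right)$, and together with $\epsilon_{p^{2s+1}} = \epsilon_p$ and $\left(\frac{-1}{c}\right) = \overline{\epsilon_c}/\epsilon_c$ the two critical contributions assemble into the stated formula.

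Part (ii) proceeds identically after the reduction $K_1(a,b,c) = \left(\frac{b}{c}\right)\sum_{(m,c)=1}\left(\frac{m}{c}\right)e_c(ab\overline m + m)$ via $\left(\frac{n}{c}\right) = \left(\frac{b}{c}\right)\left(\frac{bn}{c}\right)$ and $m = bn$. The character $\left(\frac{m}{c}\right)$ is constant on each critical coset and pulls out of the inner sum; in the odd-exponent case it combines with the Gauss-sum Jacobi factor to simplify to $1$ at each critical point (using $\left(\frac{ab}{p}\right) = 1$), leaving $\epsilon_c \left(\frac{b}{c}\right) c^{1/2}(e_c(2v) + e_c(-2v)) = \epsilon_c \left(\frac{b}{c}\right) c^{1/2}\sum_{v^2 \equiv ab} e_c(2v)$. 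The main obstacle throughout is the odd-exponent bookkeeping in part (i): matching the Jacobi symbol and $\epsilon_p$ emerging from the inner Gauss sum against the $\left(\frac{v}{c}\right)\epsilon_c$ demanded by the statement, which rests delicately on the identity $\left(\frac{a}{p}\right) = \left(\frac{b}{p}\right)$ implied by $\left(\frac{ab}{p}\right) = 1$.
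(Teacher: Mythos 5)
Your proof of part (iii) is essentially the paper's: you invoke Proposition~\ref{cochrane} with $f(x)=a/x+bx$, compute $\operatorname{ord}_p(f')=0$ from $p\nmid b$, and observe that $f'(\alpha)\equiv 0\bmod p$ would force $\alpha^2\equiv 0\bmod p$, impossible for $(\alpha,p)=1$. The only difference is cosmetic: for $K_1$ you pull the Jacobi symbol $\left(\frac{m}{p^n}\right)$ (constant on residue cosets mod $p$) outside the sum and apply the same vanishing, whereas the paper writes $K_1$ as a difference of two unweighted exponential sums and kills each separately. Both are correct; yours is arguably a bit cleaner.

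For parts (i) and (ii) the paper itself does not prove anything--it just cites \cite{IwKo}. You instead supply a self-contained stationary-phase/Hensel-lift proof, and I have checked that the bookkeeping in your odd-exponent case (the linear Taylor coefficient $b-a\overline{w}^2$ vanishing, the residual Gauss sum $G(a\overline{w}^3,0,p)$, and the reconciliation via $\left(\frac{a\overline{w}^3}{p}\right)=\left(\frac{v}{p}\right)$, $\epsilon_{p^{2s+1}}=\epsilon_p$, and $\left(\frac{-1}{p}\right)\epsilon_p=\overline{\epsilon_p}$) does assemble correctly to the stated formulae. So what you prove is correct. The one genuine scope issue is in part (ii): the proposition is stated for \emph{all} odd $c$ with $(ab,c)=1$, while your Hensel-lift argument, like the whole framework you set up, applies only to $c=p^n$ with $n\geq 2$. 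The case $c=p$ has no separation of scales and needs the classical prime-modulus Sali\'e evaluation (which is what the Iwaniec--Kowalski reference actually proves, together with multiplicativity for composite odd $c$). For the application in this paper only $c=p^{m-r}$ with $m-r\geq 2$ arises, so the restriction is harmless in context, but you should either flag that your argument covers only that case or add the $c=p$ evaluation and the multiplicativity step to match the proposition as stated.
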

\begin{proof}
See \cite[Page 322, Exercise 1]{IwKo} and \cite[Page 323, Lemma 12.4]{IwKo} for parts (i) and (ii). To prove part (iii) we  use the Proposition \ref{cochrane} as follows. The Kloosterman sum
$K_0(a,b,c)$ can be expressed as $$K_0(a,b,c)= \sum_{\alpha=1}^{p-1} S_\alpha (f;p^n), $$ where 
$$
f(x)=\frac{a}{x}+bx=\frac{a+bx^2}{x}.
$$ 
If $p\nmid b$, then  $r=ord_p(f)=0$ and $f'(\alpha)\equiv 0\bmod p$ if and only if $\alpha^2 \equiv a\overline{b} \bmod p $. Now as $p|a$, we have $a\overline{b}\equiv 0\bmod p $ and so $\alpha^2\equiv0\bmod p$. But $\alpha$ varies from 1 to $p-1$, so $\alpha^2\equiv0\bmod p$ cannot hold. Hence if $1\leq \alpha\leq p-1$, then $p^{-r}f'(\alpha)\not\equiv 0 \bmod p $ and therefore $S_\alpha(f;p^n)=0$, resulting in $K_0(a,b,c)=0$.

The Sali\'e sum $K_1(a,b,c)$ can be written as $$K_1(a,b,c)=\sum_{\substack{x=1\\(x,c)=1}}^{c-1} e_c(a\overline{x^2}+bx^2)- \sum_{\substack{x=1\\(x,c)=1}}^{c-1} e_c(a\overline{x}+bx). $$
By a similar argument as before we can show that both the exponential sums on the right-hand side of the above equation are zero when $c=p^n$, $p|a$ and $p\nmid b$. This completes the proof of part (iii).
\end{proof}

\subsection{Weighted representations by quadratic forms} We shall use the following result on weighted numbers of representations by quadratic forms, which is a variant of \cite[Theorem 1.1]{Jon}.

\begin{proposition} \label{modHeath}
Fix an odd prime $p$ and a homogeneous quadratic form  $F^{(0)}(x_1,...,x_n)\in \mathbb{Z}[x_1,...,x_n]$ in $n\ge 4$ variables which is non-singular modulo $p$ and positive definite over $\mathbb{R}$. Fix a bump function $\Omega:\mathbb{R} \rightarrow \mathbb{R}_{\ge 0}$. Suppose that $k\in \mathbb{N}$ and $P\ge 1$. For ${\bf x}=(x_1,...,x_n)\in \mathbb{R}^n$ set 
$$
w({\bf x}):=\prod_{i=1}^n\Omega(x_i)
$$
and 
$$
w_P({\bf x}):=w\left(P^{-1}{\bf x}\right).
$$
Let 
$$
\sigma_{\infty,P}(k):=\lim\limits_{\varepsilon\rightarrow 0^+} \frac{1}{2\epsilon} \int\limits_{|F^{(0)}({\bf x})-k/P^2|< \epsilon} w({\bf x}){\rm d}{\bf x}
$$
be the singular integral and 
\begin{equation*}
    \sigma(k):=\sum_{q=1}^\infty a_q(k) 
\end{equation*}
be the singular series, where 
\begin{equation}\label{series}
    a_q(k):=\frac{1}{(pq)^n}\sum_{\substack{a\bmod q\\(a,q)=1}}\sum\limits_{\substack{{\bf x}\bmod pq\\ (x_1\cdots x_n,p)=1}}e_q\left(a(F^{(0)}({\bf x})-k) \right).
\end{equation}
Then, 
\begin{equation} \label{heathbrownasymp}
\sum_{\substack{{\bf x}\in\mathbb{Z}^n\\ F^{(0)}({\bf x})=k\\ (x_1\cdots x_n,p)=1}}w_P({\bf x})=\sigma_{\infty,P}(k)\sigma(k)P^{n-2}+O\left(\left(k^{n/2-1}P^{(3-n)/2}+P^{(n-1)/2}\right)(kP)^{\varepsilon}\right).
\end{equation}
\end{proposition}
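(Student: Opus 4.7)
The plan is to deduce Proposition \ref{modHeath} from Jones's Theorem 1.1, which gives the same shape of asymptotic without the coprimality condition $(x_1\cdots x_n, p) = 1$. I would incorporate this extra condition by Möbius/inclusion-exclusion on the $n$ variables: writing
\[
\mathbf{1}\bigl[(x_1\cdots x_n, p)=1\bigr] = \sum_{S \subseteq \{1,\ldots,n\}} (-1)^{|S|} \prod_{i \in S} \mathbf{1}[p \mid x_i],
\]
the left-hand side of \eqref{heathbrownasymp} splits into an alternating sum $\sum_S (-1)^{|S|} N_S(k)$, where in each $N_S(k)$ the variables indexed by $S$ are required to be divisible by $p$ and no other constraint on coprimality is imposed.

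Next I would substitute $x_i = p y_i$ for $i \in S$ inside each $N_S(k)$, turning it into a Jones-type weighted count for the quadratic form $F^{(0)}_S$ obtained from $F^{(0)}$ by rescaling the variables indexed by $S$ by $p$, against an adjusted Schwartz weight of effective scale $P/p$ on those variables. The form $F^{(0)}_S$ remains positive definite and non-singular over $\mathbb{Q}$, since its discriminant equals $p^{2|S|}\det F^{(0)}$, so Jones's theorem applies and yields a main term of the shape $\mathcal{I}_S(k)\mathcal{J}_S(k) P^{n-2}$ with an error of the same type as in \eqref{heathbrownasymp}. Because $p$ is fixed and there are only $2^n$ subsets, the combined error contribution falls within the bound stated in \eqref{heathbrownasymp}.

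The crucial step is reassembling the main terms. At the archimedean place the divisibility conditions are measure-zero, so after taking the Jacobians of the substitutions into account the alternating sum of the $\mathcal{I}_S(k)$ collapses to $\sigma_{\infty,P}(k)$. For the singular series, the local factors at primes $\ell \ne p$ are unchanged for every $S$ and thus pull out cleanly; at $\ell = p$, I would reinterpret $\sum_S (-1)^{|S|}$ applied to the local $p$-factors of the $\mathcal{J}_S$ as the inclusion-exclusion expansion of the indicator $\mathbf{1}[(x_1\cdots x_n, p)=1]$ placed inside the defining local density. By CRT multiplicativity this reproduces precisely the series $\sigma(k)=\sum_q a_q(k)$ of \eqref{series}. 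The main obstacle is exactly this local bookkeeping at $p$: for $S \ne \emptyset$ the form $F^{(0)}_S$ is singular modulo $p$, so its $p$-adic density is not one of Jones's standard quantities and must be evaluated directly before the alternating sum is taken. Once this $p$-adic local computation is matched against \eqref{series}, the proposition follows from Jones's theorem essentially as a black box.
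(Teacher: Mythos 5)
Your plan is genuinely different from the paper's route, so let me compare before pointing at the gap. The paper does \emph{not} feed the coprimality condition through inclusion--exclusion applied to Jones's result as a black box; instead it re-runs the Kloosterman-refined circle method of \cite{Jon}/\cite{IwKo} with the condition $(x_1\cdots x_n,p)=1$ carried throughout. The condition then appears verbatim inside the exponential sums that define $a_q(k)$, producing \eqref{series} directly with no matching step. The only adaptations the authors note are: allowing a Schwartz weight rather than a bump function (a modification of the non-stationary-phase estimates), suppressing the dependence of the error term on the fixed form, and extending the modulus in the inner sum of \eqref{series} from $q$ to $pq$ (compensated by $p^{-n}$). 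Your approach, by contrast, treats \cite{Jon}[Theorem~1.1] as a black box and pushes the coprimality condition through M\"obius inversion, at the price of having to reassemble the main terms afterwards.

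The gap is precisely where you flag it, and it is not cosmetic. After substituting $x_i=py_i$ for $i\in S$, the weight becomes $\prod_{i\in S}\Omega(py_i/P)\prod_{i\notin S}\Omega(x_i/P)$, which is anisotropic; Jones's theorem as stated uses a single bump function at a single scale, so even before the local analysis at $p$ you need either an anisotropic version of the theorem or an absorption of the factor $p$ into $S$-dependent weight functions $\tilde\Omega_i$, and in either case the statement being invoked is no longer the one in \cite{Jon} and must itself be established (this is the same extension to Schwartz-class weights the paper acknowledges in point~(i) of its proof, compounded by the different weights per coordinate). More seriously, the contributions $N_S(k)$ carry a factor $p^{-|S|}$ in the main term from the volume scaling, and you must verify the identity
\[
\sum_{S}(-1)^{|S|}p^{-|S|}\,\sigma_{\infty}^{(S)}(k)\,\sigma^{(S)}(k)=\sigma_{\infty,P}(k)\,\sigma(k),
\]
where for $S\ne\emptyset$ the local $p$-factor of $\sigma^{(S)}(k)$ belongs to a form that is singular modulo $p$ and so is \emph{not} among Jones's standard densities. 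You correctly observe this must be ``evaluated directly before the alternating sum is taken,'' but then conclude the proposition ``follows from Jones's theorem essentially as a black box'' --- these two statements are in tension. The required $p$-adic computation is exactly the work that the paper's direct re-derivation avoids by building $(x_1\cdots x_n,p)=1$ into the definition of $a_q(k)$ from the outset, and in your setup it has not been carried out. Until that identity is proved, the main term of \eqref{heathbrownasymp} is not established.
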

\begin{proof}
Our proposition above is a slightly modified version of \cite[Theorem 1.1]{Jon}. These two results differ in the following points.\medskip\\
(i)  In \cite[Theorem 1.1]{Jon}, the precise dependence of the error term on the quadratic form was worked out. Here we assume that this form is fixed, and hence the error term simplifies. \medskip\\
(ii) There is an additional summation condition $(x_1\cdots x_n,p)=1$ on the left-hand side of \eqref{heathbrownasymp}. This reflects in the singular series, where we also have an extra summation condition of the same form in the definition of the summands $a_q(k)$. We note that here the modulus for the sum over ${\bf x}$ is extended from $q$ to $pq$, which is compensated by an extra factor of $p^{-n}$.\medskip\\
Proposition \ref{modHeath} can be established along the same lines as \cite[Theorem 1.1]{Jon}
using the Kloosterman refinement of the circle method, following the treatment in \cite[section 20.4]{IwKo}. 
\end{proof}


Below we record some properties of the function $\sigma_{\infty,P}(k)$ which will be needed later on.    

\begin{lemma} \label{Heathrem} Under the conditions of Proposition \ref{modHeath}, we have the following.  \medskip\\
{\rm (i)} There is a positive constant $M$ only depending on $w$ such that $|\sigma_{\infty,P}(t)|\le M$ if $t\ge 1$ and $P\ge 1$.\medskip\\
{\rm (ii)} Uniformly in $t\ge 1$ and $P\ge 1$, we have the bound $\frac{d}{dt} \sigma_{\infty,P}(t)\ll t^{-1}$.    
\end{lemma}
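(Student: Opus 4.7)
The plan is to express $\sigma_{\infty,P}(t)$ in closed form using the coarea formula, rescale via the homogeneity of $F^{(0)}$ onto the fixed compact ellipsoid $\Sigma:=\{F^{(0)}({\bf y})=1\}$, and then exploit the Schwartz decay of $w$. First, by the Gelfand--Leray formula the defining limit in Proposition \ref{modHeath} equals
$$
\sigma_{\infty,P}(t) \;=\; \int_{F^{(0)}({\bf x})=t/P^2} \frac{w({\bf x})}{|\nabla F^{(0)}({\bf x})|}\,d\sigma({\bf x}), \qquad t>0.
$$
Setting $u:=\sqrt{t}/P$ and substituting ${\bf x}=u{\bf y}$ (legitimate because $F^{(0)}$ is homogeneous of degree $2$, so $\nabla F^{(0)}$ is homogeneous of degree $1$ and the $(n-1)$-dimensional surface area scales by $u^{n-1}$), one arrives at
$$
\sigma_{\infty,P}(t) \;=\; u^{n-2}\,J(u), \qquad J(u)\;:=\;\int_{\Sigma} \frac{w(u{\bf y})}{|\nabla F^{(0)}({\bf y})|}\,d\sigma({\bf y}).
$$

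Next I would establish the key uniform decay bound on $\Sigma$. Because $F^{(0)}$ is positive definite, $\Sigma$ is compact and bounded away from the origin, so $|\nabla F^{(0)}|$ has a positive lower bound on $\Sigma$ and $|{\bf y}|\asymp 1$ there. Since $w({\bf x})=\prod_{i=1}^n \Omega(x_i)$ is Schwartz on $\mathbb{R}^n$ (as a product of one-variable Schwartz functions), for every $B>0$ one obtains
$$
|w(u{\bf y})| \;\ll_B\; (1+u|{\bf y}|)^{-B} \;\ll_B\; (1+u)^{-B},\qquad {\bf y}\in\Sigma,
$$
uniformly. Hence $|J(u)|\ll_B(1+u)^{-B}$, and part (i) follows: taking $B\ge n-2$ gives $|\sigma_{\infty,P}(t)|=u^{n-2}|J(u)|\ll u^{n-2}(1+u)^{-B}$, which is bounded on $[0,1]$ by the factor $u^{n-2}\le 1$ and on $[1,\infty)$ by $u^{n-2-B}\le 1$.

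For part (ii), I would differentiate $\sigma_{\infty,P}(t)=u^{n-2}J(u)$ in $t$, using $du/dt=1/(2uP^2)$ together with $u^2P^2=t$, to get
$$
\frac{d}{dt}\sigma_{\infty,P}(t) \;=\; \frac{1}{2t}\Bigl[(n-2)\,u^{n-2}\,J(u) + u^{n-1}\,J'(u)\Bigr].
$$
The first bracketed term is $O(1)$ by part (i). For the second, differentiation under the integral sign gives $J'(u)=\int_\Sigma \frac{{\bf y}\cdot\nabla w(u{\bf y})}{|\nabla F^{(0)}({\bf y})|}\,d\sigma({\bf y})$, and since $\nabla w$ is again Schwartz on $\mathbb{R}^n$ the same estimate yields $|J'(u)|\ll_B (1+u)^{-B}$; choosing $B\ge n-1$ makes $u^{n-1}|J'(u)|$ uniformly bounded on $u>0$ (the prefactor $u^{n-1}$ suppresses the contribution as $u\to 0^+$ since $n\ge 4$). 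Combining these bounds yields $\frac{d}{dt}\sigma_{\infty,P}(t)\ll 1/t$, as required.

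The only real technical input is the uniform Schwartz-type decay of $w(u{\bf y})$ and $\nabla w(u{\bf y})$ on $\Sigma$, which I expect to be the main (but routine) step, relying crucially on positive definiteness of $F^{(0)}$ to keep $\Sigma$ away from the origin. Everything else reduces to the calculus identity $\sigma_{\infty,P}(t)=u^{n-2}J(u)$ and standard differentiation under the integral on the compact manifold $\Sigma$.
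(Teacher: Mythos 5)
Your proof is correct and takes essentially the same approach as the paper: both isolate the radial ($t$-)dependence by a scaling substitution (the paper's chain of changes of variables in the limit representation, your Gelfand--Leray rescaling to the fixed ellipsoid $\Sigma$), both exploit the Schwartz decay of $w$ and its gradient on level sets kept away from the origin by positive definiteness, and both treat part (ii) by differentiating the scaled product via the Leibniz rule. Your version is somewhat cleaner because the identity $\sigma_{\infty,P}(t)=u^{n-2}J(u)$ is made explicit and serves both parts, but the underlying ideas coincide with the paper's.
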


\begin{proof}
{\rm (i)} For any $y\ge 0$, we have 
$$
\sup\limits_{\substack{{\bf x}\in \mathbb{R}^n\\ F^{(0)}({\bf x})=y}} |w({\bf x})| \ll \left(1+y\right)^{-A},
$$
where $A>0$ is arbitrary, the implied constant only depending on $F^{(0)}$, $w$ and $A$. This is because
$w({\bf x})$ is a product of bump functions and $F^{(0)}({\bf x})$ is positive definite. It follows that 
$$
\sup\limits_{\substack{{\bf x}\in \mathbb{R}^n\\ \left|F^{(0)}({\bf x})-y\right|<\epsilon}} |w({\bf x})| \ll \left(1+y\right)^{-A}
$$
if $0<\epsilon\le y/2$. From this, we deduce the estimate 
\begin{equation*}
\lim\limits_{\varepsilon\rightarrow 0^+} \frac{1}{2\epsilon} \int\limits_{|F^{(0)}({\bf x})-y|< \epsilon} w({\bf x}){\rm d}{\bf x} \ll \left(1+y\right)^{-A}\lim\limits_{\varepsilon\rightarrow 0^+} \frac{1}{2\epsilon} \int\limits_{|F^{(0)}({\bf x})-y|< \epsilon} {\rm d}{\bf x}.
\end{equation*}
The limit on the right-hand side equals the $(n-1)$-dimensional volume of the hypersurface
$$
\mathcal{S}=\{{\bf x}\in \mathbb{R}^n : F^{(0)}({\bf x})=y\}.
$$
Since $F^{(0)}({\bf x})$ is a positive definite quadratic form, this volume is finite and proportional to $y^{(n-1)/2}$. Now taking $A:=(n-1)/2$, it follows that 
$$
\lim\limits_{\varepsilon\rightarrow 0^+} \frac{1}{2\epsilon} \int\limits_{|F^{(0)}({\bf x})-y|< \epsilon} w({\bf x}){\rm d}{\bf x} = O(1)
$$ 
and hence $\sigma_{\infty,P}(t)=O(1)$, establishing the claim in part (i).\medskip\\
(ii) We transform the derivative with respect to $t$ via the following chain of equations using linear changes of variables and the Leibniz rule on the differentiation of integrals. 
\begin{equation} \label{initr}
\begin{split}
 \frac{d}{dt} \sigma_{\infty,P}(t) = & \frac{d}{dt}\left(\lim\limits_{\varepsilon\rightarrow 0^+} \frac{1}{2\epsilon} \int\limits_{|F^{(0)}({\bf x})-t/P^2|< \epsilon} w({\bf x}){\rm d}{\bf x}\right)\\
= &  \frac{d}{dt}\left(\lim\limits_{\varepsilon\rightarrow 0^+} \frac{1}{2\epsilon} \int\limits_{|F^{(0)}(t^{-1/2}{\bf x})-1/P^2|< \epsilon/t} w({\bf x}){\rm d}{\bf x}\right)\\
= & \frac{d}{dt}\left(t^{n/2}\lim\limits_{\varepsilon\rightarrow 0^+} \frac{1}{2\epsilon} \int\limits_{|F^{(0)}({\bf x})-1/P^2|< \epsilon/t} w(t^{1/2}{\bf x}){\rm d}{\bf x}\right)\\ 
= & \frac{d}{dt}\left(t^{n/2-1}\lim\limits_{\varepsilon\rightarrow 0^+} \frac{1}{2\epsilon} \int\limits_{|F^{(0)}({\bf x})-1/P^2|< \epsilon} w(t^{1/2}{\bf x}){\rm d}{\bf x}\right)\\ 
= &  f_1(t)+f_2(t),
\end{split}
\end{equation}
where 
$$
f_1(t):=\left(\frac{n}{2}-1\right) \cdot t^{n/2-2}\lim\limits_{\varepsilon\rightarrow 0^+} \frac{1}{2\epsilon} \int\limits_{|F^{(0)}({\bf x})-1/P^2|< \epsilon} w(t^{1/2}{\bf x}){\rm d}{\bf x}$$
and 
$$
f_2(t):= t^{n/2-1}\lim\limits_{\varepsilon\rightarrow 0^+} \frac{1}{2\epsilon} \int\limits_{|F^{(0)}({\bf x})-1/P^2|< \epsilon} \frac{d}{dt} w(t^{1/2}{\bf x}){\rm d}{\bf x}.
$$
Reversing the steps carried out in \eqref{initr}, we deduce that
\begin{equation} \label{f1bound}
\begin{split}
f_1(t)\ll & t^{n/2-2}\lim\limits_{\varepsilon\rightarrow 0^+} \frac{1}{2\epsilon} \int\limits_{|F^{(0)}({\bf x})-1/P^2|< \epsilon} w(t^{1/2}{\bf x}){\rm d}{\bf x}\\
= & t^{n/2-1}\lim\limits_{\varepsilon\rightarrow 0^+} \frac{1}{2\epsilon} \int\limits_{|F^{(0)}({\bf x})-1/P^2|< \epsilon/t} w(t^{1/2}{\bf x}){\rm d}{\bf x}\\
= & t^{-1}\lim\limits_{\varepsilon\rightarrow 0^+} \frac{1}{2\epsilon} \int\limits_{|F^{(0)}(t^{-1/2}{\bf x})-1/P^2|< \epsilon/t} w({\bf x}){\rm d}{\bf x}\\ 
= & t^{-1}\lim\limits_{\varepsilon\rightarrow 0^+} \frac{1}{2\epsilon} \int\limits_{|F^{(0)}({\bf x})-t/P^2|< \epsilon} w({\bf x}){\rm d}{\bf x}\\
= & t^{-1} \sigma_{\infty,P}(t) \\
\ll & t^{-1},
\end{split}
\end{equation}
using part (i) for the last line.  Calculating $\frac{d}{dt}w\left(t^{1/2}{\bf x}\right)$ using the definition of $w$, we obtain
$$
f_2(t)=t^{n/2-1}\lim\limits_{\varepsilon\rightarrow 0^+} \frac{1}{2\epsilon} \int\limits_{|F^{(0)}({\bf x})-1/P^2|< \epsilon} (2t)^{-1}\left(\frac{t^{1/2}x_1\Omega'(t^{1/2}x_1)}{\Omega(t^{1/2}x_1)}+\cdots +\frac{t^{1/2}x_n\Omega'(t^{1/2}x_n)}{\Omega(t^{1/2}x_n)}\right)w(t^{1/2}{\bf x}) {\rm d}{\bf x}.
$$
Again applying similar transformations as in \eqref{initr} backwards, we obtain
$$
f_2(t)= (2t)^{-1}\lim\limits_{\varepsilon\rightarrow 0^+} \frac{1}{2\epsilon} \int\limits_{|F^{(0)}({\bf x})-t/P^2|< \epsilon} \left(\frac{x_1\Omega'(x_1)}{\Omega(x_1)}+\cdots + \frac{x_n\Omega'(x_n)}{\Omega(x_n)}\right)w({\bf x}){\rm d}{\bf x}. 
$$
Now, by similar arguments as in part (i), the limit above is $O(1)$ and hence
\begin{equation} \label{f2bound}
f_2(t)\ll t^{-1}.
\end{equation}
Putting \eqref{initr}, \eqref{f1bound} and \eqref{f2bound} together,  the claim in part (ii) follows. 
\end{proof}

\section{Proof of Theorem \ref{thm1} (inhomogeneous congruences)}
\subsection{Application of Poisson summation}
Our quantity in question is 
$$
T=\sum_{\substack{{\bf x}\in\mathbb{Z}^n\\(x_1x_2\cdots x_n,p)=1\\Q({\bf x})\equiv 0\bmod p^m}} \prod_{i=1}^n\Phi\left(\frac{x_i}{N}\right).
$$
We detect the linear congruence using addititive characters and re-arrange the summations, obtaining
\begin{equation*}
\begin{split}
    T&=\frac{1}{p^m}\sum_{\substack{{\bf x}\in\mathbb{Z}^n\\(x_1x_2\cdots x_n,p)=1}} \left(\prod_{i=1}^n\Phi\left(\frac{x_i}{N}\right)\right)\left( \sum_{h=1}^{p^m}e_{p^m}(hQ({\bf x}))\right)\\
    &= \frac{1}{p^m}\sum_{h=1}^{p^m}e_{p^m}(-h\lambda_{n+1}) \prod_{i=1}^n\Bigg( \sum_{\substack{x_i\in\mathbb{Z}\\(x_i,p)=1}} \Phi\left(\frac{x_i}{N}\right) e_{p^m}\left(h\lambda_{i}x_i^2\right)\Bigg).
\end{split}
\end{equation*}
Dividing the $x_i$'s into residue classes modulo $p^m$, using the Poisson summation formula, Proposition \ref{Poisson}, and re-arranging summations, we deduce that 
\begin{equation*}
\begin{split}
    T&= \frac{1}{p^m}\sum_{h=1}^{p^m}e_{p^m}(-h\lambda_{n+1}) \prod_{i=1}^n\Bigg(
\sum\limits_{\substack{y_i=1\\ (y_i,p)=1}}^{p^m} e_{p^m}\left(h\lambda_{i}y_i^2\right)\sum_{\substack{x_i\in\mathbb{Z}\\ x_i\equiv y_i\bmod{p^m}}} \Phi\left(\frac{x_i}{N}\right)\Bigg)\\
&= \frac{N^n}{p^{m(n+1)}}\sum_{h=1}^{p^m}e_{p^m}(-h\lambda_{n+1}) \prod_{i=1}^n
\Bigg(\sum\limits_{\substack{y_i=1\\ (y_i,p)=1}}^{p^m} e_{p^m}\left(h\lambda_{i}y_i^2\right)\sum_{k_i\in \mathbb{Z}} \hat{\Phi}\left(\frac{k_iN}{p^m}\right)e_{p^m}(k_iy_i)\Bigg)\\
&= \frac{N^n}{p^{m(n+1)}}\sum_{\substack{{\bf k}\in\mathbb{Z}^n}}\left(\prod_{i=1}^n\hat{\Phi}\left(\frac{k_iN}{p^m}\right)\right)\Bigg(\sum_{h=1}^{p^m} e_{p^m}(-h\lambda_{n+1})\prod_{j=1}^{n}\sum_{\substack{y_j=1\\(y_i,p)=1}}^{p^m} e_{p^m}(h\lambda_{j}y_j^2+k_jy_j)\Bigg).
\end{split}
\end{equation*}
\subsection{Evaluation of the main term} The main term $T_0$ comes from the zero frequency, i.e. the contribution of ${\bf k}={\bf 0}$ to the last line above. We obtain
\begin{equation*}
\begin{split}
    T_0&= \frac{N^n}{p^{m(n+1)}}\cdot \hat{\Phi}(0)^n\cdot \sum_{h=1}^{p^m} e_{p^m}(-h\lambda_{n+1})\prod_{j=1}^{n}\sum_{\substack{y_j=1\\(y_i,p)=1}}^{p^m} e_{p^m}(h\lambda_{j}y_j^2)\\
    &= \frac{N^n}{p^{m(n+1)}}\cdot \hat{\Phi}(0)^n \cdot \sum_{\substack{y_1,...,y_n=1\\(y_1...y_n,p)=1}}^{p^m}\sum_{h=1}^{p^m}e_{p^m}(hQ(y_1,...,y_n)) \notag\\
    &= \frac{N^n}{p^m}\cdot\hat\Phi(0)^n\cdot \frac{\sharp\mathcal{B}_m}{p^{m(n-1)}},
\end{split}
\end{equation*}
where 
$$
\mathcal{B}_m:=\left\{{\bf y}\in\mathbb{Z}^n:~0\leq y_1,...,y_{n}\leq p^m-1,\ (y_1\cdots y_n,p)=1, \ Q({\bf y})\equiv 0 \bmod p^m\right\}.
$$
Now we note that the quantity $\sharp\mathcal{B}_m/p^{m(n-1)}$ is independent of $m$ as a consequence of Hensel's lemma. Therefore, we can replace this term by $\sharp\mathcal{B}_1/p^{n-1}$ and thus get
\begin{equation} \label{maintermev}
T_0= B_p(Q) \cdot\hat\Phi(0)^n\cdot \frac{N^n}{p^m},
\end{equation}
where $B_p(Q)$ is as given in Theorem \ref{thm1}. 

\subsection{Evaluation of the error term} 
Now we turn to the evaluation of our error term 
\begin{equation} \label{theerror}
U:=T-T_0=\frac{N^n}{p^{m(n+1)}}\sum_{\substack{{\bf k}\in\mathbb{Z}^n\\ {\bf k}\not={\bf 0}}}\Psi({\bf k})F({\bf k}), 
\end{equation}
where 
\begin{equation} \label{weight}
\Psi({\bf k}):=\prod_{i=1}^n\hat{\Phi}\left(\frac{k_iN}{p^m}\right)
\end{equation}
and 
\begin{equation} \label{Ffunction}
F({\bf k}):=\sum_{h=1}^{p^m} e_{p^m}(-h\lambda_{n+1})\prod_{j=1}^{n}\sum_{\substack{y_j=1\\(y_i,p)=1}}^{p^m} e_{p^m}(h\lambda_{j}y_j^2+k_jy_j).
\end{equation}

\subsubsection{Evaluation of Gauss sums}
Suppose that $(k_1,k_2,...,k_n,p^m)=p^r$. We first note that using the rapid decay of $\hat{\Phi}$ and ${\bf k}\not=0$, the contributions of $r=m-1$ and $r=m$ to the right-hand side of \eqref{theerror} are negligible and hence bounded by $O(1)$ if we assume that $N\geq p^{m\varepsilon}$ and $m$ tends to infinity. So we left with the contributions of $r\in \{0,...,m-2\}$. 

We first write the inner-most sum over $y_j$ on the right-hand side of \eqref{Ffunction} as  
\begin{equation*}
\begin{split}
    \sum_{\substack{y_j=1\\(y_j,p)=1}}^{p^m} e_{p^m}(h\lambda_{j}y_j^2+k_jy_j)=& \sum_{\substack{y_j=1}}^{p^m} e_{p^m}(h\lambda_{j}y_j^2+k_jy_j)-\sum_{\substack{y_j=1}}^{p^{m-1}} e_{p^{m-1}}(h\lambda_{j}py_j^2+k_jy_j)\\
    = & G(h\lambda_j,k_j,p^m)-G(h\lambda_jp,k_j,p^{m-1}),
\end{split}
\end{equation*}
where $G(a,b,c)$ is the quadratic Gauss sum defined in the section \ref{prelim}. 
Suppose $0\leq r\leq m-2$ and let $j\in\{1,...,n\}$ be such that $(k_j,p^m)=p^r$. We consider the following three cases.\\ \\
\textbf{Case 1: \boldmath{$(h,p^m)=p^s$} with \boldmath{$s>r$}.}
In this case, $p^s\nmid k_j$ and hence $G(h\lambda_j,k_j,p^m)=0 $ by Proposition \ref{Gaussprop}(ii). Also, 
$$
(h\lambda_jp,p^{m-1})= \begin{cases} p^{s+1} & \mbox{ if } s\leq m-2\\
 p^s & \mbox{ if } s\in \{m-1,m\}.
\end{cases} 
$$
In both cases, $(h\lambda_jp,p^{m-1})\nmid k_j $ and so $G(h\lambda_jp,k_j,p^{m-1})=0$.\\ \\
\textbf{Case 2: \boldmath{$(h,p^m)=p^s$} with \boldmath{$s=r$}.}
As $(h\lambda_jp,p^{m-1})= p^{r+1}\nmid k_j$, $G(h\lambda_j p,k_j,p^{m-1})=0$ by Proposition \ref{Gaussprop}(ii). Moreover, 
$$
G(h\lambda_j,k_j,p^m)=p^r\cdot G(h'\lambda_j,l_j,p^{m-r}) 
$$ 
with $h=p^rh'$ and $k_j=p^rl_j$ by Proposition \ref{Gaussprop}(i). Further, as $(h'\lambda_j,p^{m-r})=1$, we get 
\begin{align*}
  p^r\cdot G(h'\lambda_j,l_j,p^{m-r})&=p^r\cdot e\left(-\frac{\overline{4h'\lambda_j}l_j^2}{p^{m-r}}\right)G(h'\lambda_j,0,p^{m-r})  \\
  &=\left(\frac{h'\lambda_j}{p^{m-r}}\right)\epsilon_{p^{m-r}}p^{(m+r)/2}e\left(-\frac{\overline{4h'\lambda_j}l_j^2}{p^{m-r}}\right)
\end{align*}
by parts (iii) and (iv) of Proposition \ref{Gaussprop}. \\ \\
\textbf{Case 3: \boldmath{$(h,p^m)=p^s$} with \boldmath{$s<r$}.}
Proceeding similarly as in case 2, 
\begin{equation*}
\begin{split}
    G(h\lambda_j,k_j,p^m)&=p^sG(h'\lambda_j,l_j,p^{m-s})\\
    &= p^se\left(-\frac{\overline{4h'\lambda_j}l_j^2}{p^{m-s}}\right) G(h'\lambda_j,0,p^{m-s}),
\end{split}
\end{equation*}
where $h=p^sh'$ and $k_j=p^sl_j$. 
Now $(h\lambda_jp,p^{m-1})=p^{s+1}$ and also $s+1\leq r$, so $p^{s+1}| k_j$. Thus we get
\begin{align*}
    G(h\lambda_j p,k_j,p^{m-1})&=p^{s+1}G(h'\lambda_j,l_j/p,p^{m-s-2})\\
    &= p^{s+1}e\left(-\frac{\overline{4h'\lambda_j}(l_j/p)^2}{p^{m-s-2}}\right) G(h'\lambda_j,0,p^{m-s-2})\\
&= p^{s+1}e\left(-\frac{\overline{4h'\lambda_j}l_j^2}{p^{m-s}}\right) G(h'\lambda_j,0,p^{m-s-2}),
\end{align*}
again using Proposition \ref{Gaussprop}(iii). 
As $G(h'\lambda_j,0,p^{m-s})=p\cdot G(h'\lambda_j,0,p^{m-s-2})$ using Proposition \ref{Gaussprop}(iv), we deduce that
$$
G(h\lambda_j,k_j,p^m)-G(h\lambda_j p,k_j,p^{m-1})=0. 
$$

Observing the above three cases we get that 
\begin{equation}
    G(h\lambda_j,k_j,p^m)-G(h\lambda_j p,k_j,p^{m-1})=\begin{cases}
        \left(\frac{h'\lambda_j}{p^{m-r}}\right)\epsilon_{p^{m-r}}p^{(m+r)/2}e\left(-\frac{\overline{4h'\lambda_j}l_j^2}{p^{m-r}}\right)&\text{ if }(h,p^m)=p^r,\\
        0&\text{ otherwise},
    \end{cases}
\end{equation}
where $h=p^rh'$ and $(h',p)=1$.
It follows that if $(k_1,k_2,...,k_n,p^m)=p^r$ for some $0\leq r\leq m-2$, then 
\begin{equation*}
F({\bf k})=\sum_{\substack{h=1\\(h,p^m)=p^r}}^{p^m} e_{p^m}(-h\lambda_{n+1})\prod_{j=1}^{n}\left(G(h\lambda_j,k_j,p^m)-G(h\lambda_jp,k_j,p^{m-1})\right).
\end{equation*}

Suppose there is some $i\in \{1,...,n\}$ with $(k_i,p^m)=p^u>p^r$, then if $u\leq m-2$, we have already shown in Case 3 that $$G(h\lambda_i,k_i,p^m)=G(h\lambda_i p,k_i,p^{m-1}).$$
If $u=m-1,m$, similar calculations as in Case 3 give that 
\begin{align*}
    G(h\lambda_i,k_i,p^m)&=p^re\left(-\frac{\overline{4h'\lambda_i}l_i^{2}}{p^{m-r}} \right)G(h'\lambda_i,0,p^{m-r})\\
    &=p^{r+1}e\left(-\frac{\overline{4h'\lambda_i}l_i^{2}}{p^{m-r}} \right)G(h'\lambda_i,0,p^{m-r-2})\\
    &=G(h\lambda_i p,k_i,p^{m-1}).
\end{align*}
Hence, if $(k_1,k_2,...,k_n,p^m)=p^r$, then ${\bf k}$ gives a non-zero contribution to the error term $U$ if and only if $(k_j,p^m)=p^r$ for all $j\in \{1,...,n\}$. 

Summarizing the above considerations, we get
\begin{equation} \label{Usimp} 
\begin{split}
U=&  \frac{N^n}{p^{m(n+1)}}\sum_{r=0}^{m-2} \sum_{\substack{{\bf k}\in\mathbb{Z}^n\\(k_j,p^m)=p^r,\forall j}} \Psi({\bf k})F({\bf k}) +O(1)\\
 = & \frac{N^n}{p^{m(n+1)}}\sum_{r=0}^{m-2}\sum_{\substack{{\bf l}\in\mathbb{Z}^n\\(l_j,p)=1,\forall j}} \Psi(p^r{\bf l})F(p^r{\bf l})+O(1),
\end{split}
\end{equation}
and 
\begin{equation} \label{Fsimp}
F(p^r{\bf l})=\epsilon_{p^{m-r}}^n p^{n(m+r)/2} \left(\frac{\lambda_1\lambda_2\cdots \lambda_n}{p^{m-r}}\right) K_n(-A,-\lambda_{n+1},p^{m-r}),
\end{equation}
where 
$$
K_n(-A,-\lambda_{n+1},p^{m-r}):=\sum_{\substack{h=1\\ (h,p)=1}}^{p^{m-r}}\left(\frac{h}{p^{m-r}}\right)^ne_{p^{m-r}}(-A\overline{h}-\lambda_{n+1}h)
$$
and 
$$
A\equiv \overline{4}\left(\sum_{j=1}^n\overline{\lambda_j}l_j^2\right)\bmod p^{m-r}.
$$

\subsubsection{Evaluation of Kloosterman and Sali\'e sums}
The sum on the right-hand of \eqref{Fsimp} is a Kloosterman or Sali\'e sum, depending on whether $n$ is even or odd. We note that 
$$
K_n(-A,-\lambda_{n+1},p^{m-r})=K_{\tilde{n}} (A,\lambda_{n+1},p^{m-r}),
$$
where $\tilde{n}\in \{0,1\}$ with $\tilde{n}\equiv n\bmod{2}$. Recalling that $r\le m-2$, $(\lambda_{n+1},p)=1$ and $(l_1\cdots l_n,p)=1$, we deduce from parts (i) and (ii) of Proposition \ref{Kloosterman} that
\begin{equation} \label{KloSaEv}
    K_{\tilde{n}}(A,\lambda_{n+1},p^{m-r})
    = \begin{cases} 2\epsilon_{p^{m-r}}\left(\frac{\lambda_{n+1}}{p^{m-r}}\right) p^{(m-r)/2}\Re\left(e_{p^{m-r}}(2v)\right)
& \mbox{ if } n\equiv 1\bmod{2}\\ \\
2\left(\frac{v}{p^{m-r}}\right)p^{(m-r)/2} \Re\left(\epsilon_{p^{m-r}}e_{p^{m-r}}(2v)  \right) & \mbox{ if } n\equiv 0\bmod{2} \end{cases}
\end{equation}
if $(A,p)=1$ and $v$ is a solution of the congruence $v^2\equiv A\lambda_{n+1}\bmod p^{m-r}$. If no such solution exists or $(A,p)\not=1$, then $K_{\tilde{n}}(A,\lambda_{n+1},p^{m-r})=0$, using part (iii) of Proposition \ref{Kloosterman}. \\ \\
{\bf Case 1: \boldmath{$n$} odd.} In this case, it follows that
\begin{equation}\label{eqodd}
    U= \frac{N^n}{p^{m(n+1)/2}}\sum_{r=0}^{m-2}\epsilon_{p^{m-r}}^{n+1} p^{(n-1)r/2}\left(\frac{\Delta}{p^{m-r}}\right)\sum_{\substack{{\bf l}\in\mathbb{Z}^n\\(l_1\cdots l_n,p)=1\\ (A,p)=1}} 2\Psi(p^r{\bf l})\Re\left(e_{p^{m-r}}(2v)\right) +O(1),
\end{equation}
where $\Delta=\lambda_1\lambda_2...\lambda_n\lambda_{n+1}$. We can write the above congruence $v^2\equiv A\lambda_{n+1}\bmod p^{m-r}$ in the form 
$$ 
 (2v)^2\equiv \tilde{Q}({\bf l})\Lambda \bmod p^{m-r},
$$
where 
$$
\tilde{Q}({\bf l}):=\sum_{j=1}^n\Delta_jl_j^2 \quad \mbox{and} \quad \Delta_{n+1}\Lambda \equiv 1 \bmod{p^{m}},
$$
with $\Delta_j:=\Delta/\lambda_j $ for $j=1,2,...,n+1$. We note that $(A,p)=1$ if and only if $(\tilde{Q}({\bf l}),p)=1$. For $k\in \mathbb{Z}$, we define 
\begin{equation} \label{taudef}
    \tau_n(k):=\sum_{\substack{{\bf l}\in\mathbb{Z}^n\\(l_1\cdots l_n,p)=1\\ \tilde{Q}({\bf l})=k}}\Psi(p^r{\bf l}).
    \end{equation}
We also note that $\tilde{Q}({\rm l})$ is positive definite if $\lambda_{n+1}>0$ and negative definite if $\lambda_{n+1}<0$ since $\lambda_1,...,\lambda_n$ are supposed to be positive in Theorem \ref{thm1}. In what follows, we treat only the case when $\lambda_{n+1}>0$, the other case being similar. In this case, $\tilde{Q}({\bf l})$ only represents non-negative $k$'s.

Now, re-arranging summations, we obtain
\begin{equation*}
\begin{split}
    U = \frac{N^n}{p^{m(n+1)/2}}\sum_{r=0}^{m-2}\epsilon_{p^{m-r}}^{n+1} p^{(n-1)r/2}\left(\frac{\Delta}{p^{m-r}}\right) \sum_{\substack{k\in \mathbb{N}\\ (k,p)=1}} 2\tau_n(k)\Re\left(e_{p^{m-r}}(2v)\right) +O(1),
\end{split}
\end{equation*}
where $v$ is a solution of the congruence $(2v)^2\equiv k\Lambda \bmod{p^{m-r}}$, if existent. Using Hensel's lemma, we have
$$
2\Re\left(e_{p^{m-r}}(2v)\right)=\sum\limits_{u^2\equiv k\Lambda\bmod{p^{m-r}}} e_{p^{m-r}}(u).
$$ 
We also observe that the $k$-sum above can be truncated at 
\begin{equation} \label{Kdef}
K:=nM^2_r\Delta_{\max},
\end{equation}
where 
\begin{equation} \label{Mrdef}
M_r:=\frac{p^{m-r+\varepsilon m}}{N} \quad \mbox{and} \quad \Delta_{\max}:= \max \{ |\Delta_j|: j=1,2,...,n \},
\end{equation}
at the cost of a negligible error. (Here it becomes essential that $\lambda_1,...,\lambda_n$ are fixed.) Hence, $U$ takes the form 
\begin{equation}\label{odderror}
    U = \frac{N^n}{p^{m(n+1)/2}}\sum_{r=0}^{m-2}\epsilon_{p^{m-r}}^{n+1}p^{(n-1)r/2}\left(\frac{\Delta}{p^{m-r}}\right) \sum_{\substack{0<k\leq K\\ (k,p)=1}}\tau_n(k) \sum\limits_{u^2\equiv k\Lambda\bmod{p^{m-r}}} e_{p^{m-r}}(u)+O(1)
\end{equation}
if $n$ is odd. \\ \\
{\bf Case 2(i): \boldmath{$n$} even, \boldmath{$r\le m-2$} and \boldmath{$\epsilon_{p^{m-r}}=1$}.} In this case, we necessarily have $p\equiv 1 \bmod{4}$ or $p\equiv3 \bmod{4}$ and $m-r$ even, and hence
$$
\left(\frac{a}{p^{m-r}}\right)=\left(\frac{-a}{p^{m-r}}\right)
$$ 
for all $a\in \mathbb{Z}$. Therefore, using \eqref{KloSaEv} for $n\equiv 0\bmod{2}$, we obtain
\begin{equation*}
\begin{split}
     K_{\tilde{n}}(A,\lambda_{n+1},p^{m-r})=& 2p^{(m-r)/2}\left(\frac{v}{p^{m-r}}\right) \Re\left(e_{p^{m-r}}(2v)  \right)\\ 
    = & p^{(m-r)/2}\left(\frac{v}{p^{m-r}}\right)\left(e_{p^{m-r}}(2v)+e_{p^{m-r}}(-2v)\right)\\
    = & p^{(m-r)/2}\left(\frac{2}{p^{m-r}}\right) \sum\limits_{u^2\equiv k\Lambda\bmod{p^{m-r}}} \left(\frac{u}{p^{m-r}}\right)e_{p^{m-r}}(u).
\end{split}
\end{equation*}
Now, similarly as in Case 1, we deduce that the contribution of $r$ to $U$ equals
$$
 \frac{N^n}{p^{m(n+1)/2}}\cdot \sum_{r=0}^{m-2}p^{(n-1)r/2}\left(\frac{2\Delta_{n+1}}{p^{m-r}}\right) \sum_{\substack{0<k\leq K\\ (k,p)=1}}\tau_n(k) \sum\limits_{u^2\equiv k\Lambda\bmod{p^{m-r}}} \left(\frac{u}{p^{m-r}}\right)e_{p^{m-r}}(u)+O(1).
$$
{\bf Case 2(ii): \boldmath{$n$} even, \boldmath{$r\le m-2$} and \boldmath{$\epsilon_{p^{m-r}}=i$}.}
In this case, we necessarily have $p\equiv3 \bmod{4}$ and $m-r$ odd and hence
$$
\left(\frac{a}{p^{m-r}}\right)=-\left(\frac{-a}{p^{m-r}}\right)
$$ 
for all $a\in \mathbb{Z}$. Therefore, using \eqref{KloSaEv}, we obtain
\begin{equation*}
\begin{split}
     K_{\tilde{n}}(A,\lambda_{n+1},p^{m-r})=& 2p^{(m-r)/2}\left(\frac{v}{p^{m-r}}\right) \Re\left(ie_{p^{m-r}}(2v)  \right)\\ 
    = & p^{(m-r)/2}\left(\frac{v}{p^{m-r}}\right)\left(ie_{p^{m-r}}(2v)-ie_{p^{m-r}}(-2v)\right)\\
    = & ip^{(m-r)/2}\left(\left(\frac{v}{p^{m-r}}\right)e_{p^{m-r}}(2v)+\left(\frac{-v}{p^{m-r}}\right)e_{p^{m-r}}(-2v)\right)\\
    = & ip^{(m-r)/2}\left(\frac{2}{p^{m-r}}\right) \sum\limits_{u^2\equiv k\Lambda\bmod{p^{m-r}}} \left(\frac{u}{p^{m-r}}\right)e_{p^{m-r}}(u).
\end{split}
\end{equation*}
Again, along similar lines as above, we obtain the same contribution of $r$ to $U$ as in Case 2(ii), with an extra factor of $i$. Combining the results in Cases 2(i) and (ii), we deduce that
\begin{equation}\label{evenerror}
    U = \frac{N^n}{p^{m(n+1)/2}}\sum_{r=0}^{m-2}\epsilon_{p^{m-r}}^{n+1}p^{(n-1)r/2}\left(\frac{2\Delta_{n+1}}{p^{m-r}}\right)
\sum_{\substack{0<k\leq K\\ (k,p)=1}}\tau_n(k) \sum\limits_{u^2\equiv k\Lambda\bmod{p^{m-r}}} \left(\frac{u}{p^{m-r}}\right)e_{p^{m-r}}(u)+O(1)
\end{equation}
if $n$ is even. 

\subsubsection{Evaluation of sums involving square roots modulo prime powers}
To estimate the sums over $k$ in \eqref{odderror} and \eqref{evenerror}, we establish the following.   
\begin{proposition}\label{1stprop}
Let $p$ be a fixed odd prime, $a,\Lambda\in \mathbb{Z}$ with $(a\Lambda,p)=1$, $s\in \mathbb{N}$ with $s\ge 2$,  $b\in \mathbb{Z}$, $c\in \mathbb{N}$ and $0< K\le p^s$. Then
$$
\sum\limits_{\substack{ 0<k\le K\\ k\equiv b \bmod{c}\\ (k,p)=1}}\ \sum\limits_{\substack{u^2\equiv k\Lambda\bmod{p^s}\\ u\equiv a\bmod{p}}} e_{p^s}(u) \ll_p p^{s/2}\log p^s.
$$ 
\end{proposition}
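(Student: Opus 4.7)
The plan is to apply Hensel's lemma to collapse the inner sum: since $(a,p)=1$ and $(\Lambda,p)=1$, for each $k$ with $k\equiv a^2\overline{\Lambda}\bmod p$ there is a unique $u=u(k)\bmod p^s$ satisfying $u^2\equiv k\Lambda\bmod p^s$ and $u\equiv a\bmod p$, and for other $k$ the inner sum is empty. Combining this implicit congruence with $k\equiv b\bmod c$ via CRT reduces the sum to $\sum_k e_{p^s}(u(k))$ over a single arithmetic progression $k\equiv B\bmod C$ with $p\mid C$, say $C=p^jC_0$ with $(C_0,p)=1$ and $j\ge 1$. Writing $k=B+Cm$, the number of $m$'s is $M\le p^{s-j}/C_0+1$. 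When $j\ge\lceil s/2\rceil$ the trivial bound $|S|\le M\le p^{s/2}+1$ already suffices, so the substantive case is $j<\lceil s/2\rceil$.

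The central step is a Hensel-type linearization. Set $t:=\lceil s/2\rceil - j\ge 1$, partition $m\in[0,M)$ by residue $m_0:=m\bmod p^t$, write $m=m_0+p^t\ell$, and set $u_{m_0}:=u(B+Cm_0)$. The identity $u(k)^2-u_{m_0}^2\equiv \Lambda Cp^t\ell\bmod p^s$ factors as
$$(u(k)-u_{m_0})(u(k)+u_{m_0})\equiv \Lambda Cp^t\ell\pmod{p^s},$$
and since $u(k)+u_{m_0}\equiv 2a$ is a unit mod $p$, I obtain $v_p(u(k)-u_{m_0})\ge j+t=\lceil s/2\rceil$, hence $(u(k)+u_{m_0})^{-1}\equiv(2u_{m_0})^{-1}\bmod p^{\lceil s/2\rceil}$. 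The residual error in approximating $u(k)-u_{m_0}$ by $\Lambda Cp^t\ell\cdot\overline{2u_{m_0}}$ has $p$-adic valuation at least $(j+t)+\lceil s/2\rceil=2\lceil s/2\rceil\ge s$, yielding the \emph{exact} linearization
$$u(k)\equiv u_{m_0}+\Lambda Cp^t\overline{2u_{m_0}}\,\ell\pmod{p^s}.$$
Since $v_p(\Lambda Cp^t\overline{2u_{m_0}})=\lceil s/2\rceil$, the $\ell$-sum reduces to a linear exponential sum modulo $N:=p^{\lfloor s/2\rfloor}$:
$$\sum_\ell e_{p^s}(u(k))=e_{p^s}(u_{m_0})\sum_\ell e_N(A_{m_0}\ell),\qquad A_{m_0}:=\Lambda C_0\overline{2u_{m_0}}\in(\mathbb{Z}/N)^{\times},$$
with $\ell$ in an interval of length $L\le N+2$.

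To finish, I would apply the standard estimate $\bigl|\sum_{\ell=0}^{L-1}e_N(A\ell)\bigr|\le\min(L,N/(2|A|_N))$, where $|A|_N$ denotes the distance of $A$ to $N\mathbb{Z}$, and sum over $m_0\in[0,p^t)$. The crucial observation is that $m_0\mapsto u_{m_0}\bmod N$ is at most $p$-to-one on $[0,p^t)$: writing $\Delta(m):=u(B+Cm)-u(B)$, the identity $\Delta(m)(2u(B)+\Delta(m))\equiv \Lambda Cm\bmod p^s$ yields $v_p(u_{m_0}-u_{m_0'})=j+v_p(m_0-m_0')$, so the coincidence $u_{m_0}\equiv u_{m_0'}\bmod N$ forces $v_p(m_0-m_0')\ge\lfloor s/2\rfloor-j$, which admits no solutions with $m_0\neq m_0'$ in $[0,p^t)$ when $s$ is even and at most $p-1$ nontrivial solutions per $m_0$ when $s$ is odd. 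Combining this with the standard estimate $\sum_{y\in(\mathbb{Z}/N)^{\times}}\min(L,N/(2|y|_N))\ll N\log N$ gives
$$|S|\le\sum_{m_0\in[0,p^t)}\min\!\Bigl(L,\tfrac{N}{2|A_{m_0}|_N}\Bigr)\ll p\cdot N\log N\ll_p p^{s/2}\log p^s,$$
as claimed. The main delicacy is the $p$-adic bookkeeping that makes the linearization exact to the full modulus $p^s$; once this is in hand, the rest is a routine completion/geometric-series argument.
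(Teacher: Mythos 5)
Your proof is correct, and at a high level it uses the same mechanism as the paper: Hensel's lemma to collapse the square root to the modulus $p^{\lceil s/2\rceil}$, followed by a linear exponential sum estimate giving the $\log$. The bookkeeping you do for the $c$-congruence (absorbing it at the start via CRT into a single progression $k\equiv B\bmod C$) and the linearization step are all sound; I checked the key valuation computations and they hold, including the fact that the error in replacing $(u(k)+u_{m_0})^{-1}$ by $(2u_{m_0})^{-1}$ dies mod $p^s$ and that the map $m_0\mapsto u_{m_0}\bmod N$ is at most $p$-to-one for the stated reason (exactly injective for $s$ even, $p$-to-one for $s$ odd).

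The decomposition, however, is genuinely different from the paper's. The paper parametrizes by the truncated square root $w\bmod p^t$ (with $w\equiv a\bmod p$) and, for each $w$, sums the phase over the $k$'s lying in the progression $k\equiv w^2\overline{\Lambda}\bmod p^t$; after a change of variable $k=dp^t+f(w^2\overline\Lambda)$ this is a geometric sum with frequency $\overline{2w}\Lambda\tilde c/p^t$, and the final sum $\sum_w\|\overline{2w}\Lambda\tilde c/p^t\|^{-1}$ is handled directly because $w$ already runs bijectively over the relevant frequency residues. You instead parametrize by $k$ itself, split $k$ into blocks of length $p^{\lceil s/2\rceil}C_0$ (your $m=m_0+p^t\ell$), linearize the phase inside each block, and then need the extra ``at most $p$-to-one'' lemma to control how the block index $m_0$ hits the frequencies $A_{m_0}\bmod N$. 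So the paper's choice of outer parameter avoids that multiplicity argument, while yours is arguably the more natural first attack from the $k$-sum and has the small advantage of treating even and odd $s$ uniformly via $\lceil s/2\rceil$ and $\lfloor s/2\rfloor$ instead of in two separate cases. Both land on the identical $O_p(p^{s/2}\log p^s)$ bound.
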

\begin{proof}   
First suppose that $s=2t$ is even. Suppose that $w^2\equiv k\Lambda \bmod{p^t}$ and $w\equiv a\bmod{p}$. Then using Hensel's lemma, $w$ lifts uniquely to a solution $u$ of the congruence $u^2\equiv k\Lambda\bmod{p^s}$ satisfying $u\equiv a\bmod{p}$. More precisely, we have $u=w+hp^t$, where $h$ is unique modulo $p^t$ and satisfies
$$
    (w+hp^t)^2\equiv k\Lambda \bmod p^{2t}.
$$
This is equivalent to 
$$
    \frac{w^2-k\Lambda}{p^t}+2wh\equiv 0\bmod p^t,
$$
which in turn is equivalent to
$$
h \equiv\overline{2w}\cdot\frac{k\Lambda-w^2}{p^t}\bmod p^t,
$$
where $2w\cdot \overline{2w}\equiv 1 \bmod{p^t}$.
It follows that 
\begin{equation*}
\begin{split}
\bigg| \sum\limits_{\substack{0<k\leq K\\k\equiv b \bmod{c}\\ (k,p)=1}}\ \sum\limits_{\substack{u^2\equiv k\Lambda\bmod{p^s}\\ u\equiv a\bmod{p}}} e_{p^s}(u) \bigg| =& \bigg|
\sum\limits_{\substack{ w\bmod p^t\\w\equiv a \bmod{p}}}\ \sum\limits_{\substack{ k\equiv w^2\overline{\Lambda} \bmod p^t\\k\equiv b \bmod c\\0<k\leq K}} e_{p^{2t}}\left(w+\overline{2w}\cdot\frac{k\Lambda-w^2}{p^t}\cdot p^t  \right) \bigg| \\
   \le & \sum\limits_{\substack{ w\bmod p^t\\w\equiv a \bmod{p}}}\bigg| \sum_{\substack{d\in\mathbb{Z}\\  dp^t+f(w^2\overline{\Lambda})\equiv b \bmod c\\
0<dp^t+f(w^2\overline{\Lambda})\leq K}} 
e_{p^{t}}\left(\overline{2w}\Lambda d\right) \bigg|,
\end{split}
\end{equation*}
where for $z\in \mathbb{Z}$, $f(z)$ is defined by the relations 
$$
f(z)\equiv z \bmod{p^t} \quad \mbox{and} \quad 0\leq f(z) <p^t.
$$ 

Suppose that $g\in\mathbb{N}_0$ such that $c=p^g\tilde{c}$, where $(p,\tilde{c})=1$. If $g\le t$, then, writing $p^t\overline{p^t}\equiv 1\bmod{\tilde{c}}$, we get
\begin{equation*}
\begin{split}
& \sum\limits_{\substack{ w\bmod p^t\\w\equiv a \bmod{p}}}\bigg| \sum\limits_{\substack{d\in\mathbb{Z}\\  dp^t+f(w^2\overline{\Lambda})\equiv b \bmod c\\
0<dp^t+f(w^2\overline{\Lambda})\leq K}} 
e_{p^{t}}\left(\overline{2w}\Lambda d\right)\bigg|\\ = &
\sum\limits_{\substack{ w\bmod p^t\\w\equiv a \bmod{p}\\ f(w^2\overline{\Lambda})\equiv b\bmod{p^g}}}\bigg| \sum\limits_{\substack{d\in\mathbb{Z}\\  dp^t+f(w^2\overline{\Lambda})\equiv b \bmod \tilde{c}\\
0<dp^t+f(w^2\overline{\Lambda})\leq K}} 
e_{p^{t}}\left(\overline{2w}\Lambda d \right) \bigg|\\
= & \sum\limits_{\substack{ w\bmod p^t\\w\equiv a \bmod{p}\\ f(w^2\overline{\Lambda})\equiv b\bmod{p^g}}}\bigg| \sum\limits_{\substack{d\equiv \overline{p^t}(b-f(w^2\overline{\Lambda})) \bmod \tilde{c}\\
0<dp^t+f(w^2\overline{\Lambda})\leq K}} 
e_{p^{t}}\left(\overline{2w}\Lambda d \right)\bigg|\\
= & \sum\limits_{\substack{ w\bmod p^t\\w\equiv a \bmod{p}\\ f(w^2\overline{\Lambda})\equiv b\bmod{p^g}}}\bigg| \sum\limits_{\substack{\alpha\in \mathbb{Z}\\ 
0<(\alpha\tilde{c}+\overline{p^t}(b-f(w^2\overline{\Lambda})))p^t+f(w^2\overline{\Lambda})\leq K}} 
e_{p^{t}}\left(\overline{2w}\Lambda (\alpha\tilde{c}+\overline{p^t}(b-f(w^2\overline{\Lambda}))) \right) \bigg|\\
= & \sum\limits_{\substack{ w\bmod p^t\\w\equiv a \bmod{p}\\ f(w^2\overline{\Lambda})\equiv b\bmod{p^g}}}\bigg| \sum\limits_{\substack{\alpha\in \mathbb{Z}\\ 
\alpha\in I_w}} 
e_{p^{t}}\left(\overline{2w}\Lambda \tilde{c}\alpha\right) \bigg| \le \sum\limits_{\substack{w\bmod p^t\\ (w,p)=1}}\bigg| \sum\limits_{\substack{\alpha\in \mathbb{Z}\\ 
\alpha\in I_w}} 
e_{p^{t}}\left(\overline{2w}\Lambda \tilde{c}\alpha\right) \bigg|,
\end{split}
\end{equation*}
where  $I_w$ is an interval of length $K/(\tilde{c}p^t)$ depending on $w$.
By a standard bound for linear exponential sums (see \cite[(8.6)]{IwKo}, for example), the last double sum is bounded by 
\begin{equation*}
\sum\limits_{\substack{w\bmod p^t\\ (w,p)=1}}\Big| \sum\limits_{\substack{\alpha\in \mathbb{Z}\\ 
\alpha\in I_w}} 
e_{p^{t}}\left(\overline{2w}\Lambda \tilde{c}\alpha\right) \Big|\ll \sum\limits_{\substack{w\bmod p^t\\ (w,p)=1}} \left|\left| \frac{\overline{2w}\Lambda \tilde{c}}{p^t} \right|\right|^{-1}\\ = \sum\limits_{\substack{\tilde{w}\bmod p^t\\ (\tilde{w},p)=1}} \left|\left|\frac{\tilde{w}}{p^t}\right|\right|^{-1} \ll p^t\log p^t\ll p^{s/2}\log p^s, 
\end{equation*}
where for $x\in \mathbb{R}$, $||x||$ is the distance of $x$ to the nearest integer.
 
If $g>t$, then the outer sum over $w$ has $O(1)$ summands, and the inner sum has $O(1+K/(\tilde{c}p^g))$ summands. Hence, recalling $K\le p^s$ and $t=s/2$, the sum in question is trivially bounded by $O(p^{s/2})$ in this case. This completes the proof for the case when $s$ is even. If $s$ is odd, then we write $s=2t+1$ and take $w$ to be a solution of $w^2\equiv k\Lambda \bmod{p^{t+1}}$ with $w\equiv a\bmod{p}$. This can be lifted uniquely to a solution of $u^2\equiv k\Lambda \bmod{p^s}$ with $u\equiv a\bmod{p}$. Similar arguments as before give a bound of $O\left(p^{t+1}\log p^t\right)$ for the sum in question, from which the desired result follows. 
\end{proof}

We deduce the following from Proposition \ref{1stprop}.

\begin{corollary} \label{2stcor}
Let $p$ be a fixed odd prime, $\mu\in \{0,1\}$, $\Lambda\in \mathbb{Z}$ with $(\Lambda,p)=1$, $s\in \mathbb{N}$ with $s\ge 2$,  $b\in \mathbb{Z}$, $c\in \mathbb{N}$ and $0< K\le p^s$. Then
$$
\sum\limits_{\substack{0<k\le K\\ k\equiv b \bmod{c}\\ (k,p)=1}}\ \sum\limits_{\substack{u^2\equiv k\Lambda\bmod{p^s}}} \left(\frac{u}{p^s}\right)^{\mu}e_{p^s}(u) \ll_p p^{s/2}\log p^s.
$$ 
\end{corollary}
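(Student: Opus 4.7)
The plan is to reduce Corollary \ref{2stcor} directly to Proposition \ref{1stprop} by partitioning the inner sum over square roots $u$ according to the residue class of $u$ modulo $p$. First I would observe that, because $p$ is an odd prime, the Jacobi symbol factors as $\left(\frac{u}{p^s}\right)=\left(\frac{u}{p}\right)^s$, and therefore depends on $u$ only through $u\bmod p$. Moreover, since $(k\Lambda,p)=1$, every solution $u$ of $u^2\equiv k\Lambda\bmod p^s$ is coprime to $p$, so $u$ necessarily lies in one of the classes $1\le a\le p-1$ modulo $p$.

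Given this, the inner sum over $u$ decomposes as
$$
\sum\limits_{u^2\equiv k\Lambda\bmod p^s}\left(\frac{u}{p^s}\right)^{\mu}e_{p^s}(u)=\sum_{a=1}^{p-1}\left(\frac{a}{p}\right)^{s\mu}\sum\limits_{\substack{u^2\equiv k\Lambda\bmod p^s\\ u\equiv a\bmod p}}e_{p^s}(u),
$$
and the Jacobi-symbol factor (of modulus at most $1$) is independent of $k$. Substituting this decomposition into the left-hand side of Corollary \ref{2stcor} and interchanging the outer summation over $k$ with the finite sum over $a$, I reduce the problem to estimating, for each fixed $a\in\{1,\dots,p-1\}$, a double sum of exactly the form treated in Proposition \ref{1stprop} (with the same $\Lambda$, $s$, $b$, $c$, $K$). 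That proposition bounds each such double sum by $O_p\!\left(p^{s/2}\log p^s\right)$, and then summing the $p-1$ contributions (a fixed number, since $p$ is fixed) yields the claimed bound.

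I do not anticipate any genuine obstacle: the entire step consists in noting that $\left(\frac{\cdot}{p^s}\right)$ is locally constant on coprime residue classes modulo $p$, which allows it to be pulled outside the inner sum, after which Proposition \ref{1stprop} is applied finitely many times. The triangle inequality absorbs both the Jacobi-symbol factors and the sum over the $p-1$ classes into the implicit constant indicated by $\ll_p$.
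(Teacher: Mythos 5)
Your proposal is correct and matches the paper's own proof essentially verbatim: both decompose the inner sum according to the residue class $a$ of $u$ modulo $p$, pull out the factor $\left(\frac{a}{p^s}\right)^\mu$ (which depends only on $u\bmod p$), and apply Proposition \ref{1stprop} once for each of the $p-1$ classes. Nothing further is needed.
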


\begin{proof}
We have 
$$
\sum\limits_{\substack{0<k\le K\\ k\equiv b \bmod{c}\\ (k,p)=1}}\ \sum\limits_{\substack{u^2\equiv k\Lambda\bmod{p^s}}} \left(\frac{u}{p^{s}}\right)^{\mu}e_{p^s}(u)=
\sum\limits_{\substack{a\bmod{p}\\ (a,p)=1}} \left(\frac{a}{p^s}\right)^{\mu}\sum\limits_{\substack{0<k\le K\\ k\equiv b \bmod{c}\\ (k,p)=1}}\ \sum\limits_{\substack{u^2\equiv k\Lambda\bmod{p^s}\\ u\equiv a\bmod{p}}} e_{p^s}(u).
$$
Now the desired result follows by applying Proposition \ref{1stprop}.
\end{proof}

\subsubsection{Final evaluation}
Now we are ready to finalize our estimation of the error term $U$. We shall prove the following for the $k$-sums in \eqref{odderror} and \eqref{evenerror}. 

\begin{lemma}\label{1stlemma}
Let $p$ be a fixed odd prime, $\mu\in \{0,1\}$, $\Lambda\in \mathbb{Z}$ with $(\Lambda,p)=1$, $r\in \mathbb{N}$ with $0\le r\le m-2$ and $1\le K,N\le p^{m-r}$. Define $\tau_n(k)$ as in \eqref{taudef} and set $s:=m-r$. Then if $n\ge 6$, we have   
    \begin{equation*} 
\sum\limits_{\substack{0<k\leq K\\ (k,p)=1}}\tau_n(k) \sum\limits_{u^2\equiv k\Lambda\bmod{p^s}} \left(\frac{u}{p^s}\right)^{\mu}e_{p^s}(u) \ll \left(\left(\frac{p^s}{N}\right)^{n-2}p^{s/2}+K^{n/2}\left(\frac{p^s}{N}\right)^{(3-n)/2}+K\left(\frac{p^s}{N}\right)^{(n-1)/2}\right)p^{\varepsilon s}.
    \end{equation*}
\end{lemma}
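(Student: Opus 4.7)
The plan is to reduce $\tau_n(k)$ to its leading asymptotic via Proposition \ref{modHeath}. Setting $P:=p^s/N\ge 1$ and $\Omega:=\hat{\Phi}$, one checks that $\Psi(p^r{\bf l})=\prod_i\hat{\Phi}(l_iN/p^{m-r})=w_P({\bf l})$, and the form $\tilde{Q}$ is positive definite because $\lambda_1,\ldots,\lambda_n>0$ and $\lambda_{n+1}>0$. Proposition \ref{modHeath} then yields $\tau_n(k)=\sigma_{\infty,P}(k)\sigma(k)P^{n-2}+E(k)$ with $E(k)\ll(k^{n/2-1}P^{(3-n)/2}+P^{(n-1)/2})(kP)^{\varepsilon}$. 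The inner sum over $u$ in the statement, call it $S(k)$, has at most two nonzero terms, each of modulus $1$, so estimating trivially produces the $E$-contribution
\begin{equation*}
\sum_{k\le K}|E(k)|\cdot|S(k)|\ll\bigl(K^{n/2}P^{(3-n)/2}+KP^{(n-1)/2}\bigr)p^{\varepsilon s},
\end{equation*}
which is precisely the second and third terms of the target bound after substituting $P=p^s/N$.

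The remaining task is to show
\begin{equation*}
\Bigl|\sum_{\substack{0<k\le K\\(k,p)=1}}\sigma_{\infty,P}(k)\sigma(k)S(k)\Bigr|\ll p^{s/2}p^{\varepsilon s},
\end{equation*}
since multiplying by $P^{n-2}$ then produces the first term. I would expand $\sigma(k)=\sum_{q\ge 1}a_q(k)$. Splitting the ${\bf x}$-sum in \eqref{series} via CRT (for $(p,q)=1$) leaves $a_q(k)=(p-1)^n(pq)^{-n}\sum_{(a,q)=1}G_q(a)e_q(-ak)$ with $G_q(a):=\sum_{{\bf x}\bmod q}e_q(a\tilde{Q}({\bf x}))$; the standard bound $|G_q(a)|\le q^{n/2}$ yields $|a_q(k)|\ll q^{1-n/2}$ and Fourier coefficients $|c(a,q)|\ll q^{-n/2}$. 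Truncating the singular series at $Q_0:=p^{s/2}$ bounds the tail trivially by $K\cdot Q_0^{2-n/2}\ll p^{s/2}$ for $n\ge 6$ and $K\le p^s$. For each $q\le Q_0$, I would split $k$ into residue classes $k\equiv b\bmod q$ (so $e_q(-ak)$ is constant), apply Abel summation to shift the smooth weight $\sigma_{\infty,P}$ off (using $|\sigma_{\infty,P}|\ll 1$ and $\sigma_{\infty,P}'(t)\ll t^{-1}$ from Lemma \ref{Heathrem}), and then invoke Corollary \ref{2stcor} with modulus $c=q$ to bound each inner sum by $p^{s/2}p^{\varepsilon s}$. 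Collecting the $\phi(q)$ residues in $a$ and the $q$ residues in $b$ with $|c(a,q)|\ll q^{-n/2}$ yields a bound $q^{2-n/2}\cdot p^{s/2}p^{\varepsilon s}$ for the $q$-th piece, and summing
$\sum_{q\le Q_0}q^{2-n/2}\ll (\log Q_0)\ll p^{\varepsilon s}$ finishes the estimate for $n\ge 6$.

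The main obstacle is capturing genuine cancellation in the $k$-sum rather than bounding $\sigma(k)$ pointwise: without the averaging provided by Corollary \ref{2stcor}, one would only get an $O(K^{1+\varepsilon})$ bound, which is much too weak. The condition $n\ge 6$ enters in exactly two places, both made visible above: it makes the truncation tail $K\cdot Q_0^{2-n/2}$ small at $Q_0=p^{s/2}$, and it renders $\sum_q q^{2-n/2}$ at worst logarithmically divergent, so that the accumulated cost of splitting into residue classes mod $q$ is absorbed into $p^{\varepsilon s}$.
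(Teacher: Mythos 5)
Your proposal is correct and follows essentially the same route as the paper: decompose $\tau_n(k)$ via Proposition \ref{modHeath}, bound the error-term contribution trivially using $|S(k)|\le 2$, expand the singular series, split $k$ into residue classes modulo $q$, strip $\sigma_{\infty,P}$ by partial summation using Lemma \ref{Heathrem}, and invoke Corollary \ref{2stcor}. The only minor deviation is that you truncate the singular series at $Q_0=p^{s/2}$ uniformly for all $n\ge 6$, whereas the paper sums the full (absolutely convergent) series for $n\ge 7$ and introduces a truncation $L=P^{3/2}$ only for $n=6$; both choices yield the same bound, and yours has the modest advantage of treating all $n\ge 6$ uniformly.
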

\begin{proof}
Recalling \eqref{weight} and \eqref{taudef}, we have 
\begin{equation*} 
    \tau_n(k)=\sum_{\substack{{\bf l}\in\mathbb{Z}^n\\(l_1\cdots l_n,p)=1\\ \tilde{Q}({\bf l})=k}} w_P({\bf l}),
    \end{equation*}
where 
\begin{equation} \label{Pdef}
w_P({\bf x}):=\prod\limits_{i=1}^n\hat{\Phi}\left(\frac{x_i}{P}\right) \quad \mbox{with} \quad P=\frac{p^s}{N}.
\end{equation}
We note that the condition $N\le p^s$ in Lemma \ref{1stlemma} implies $P\ge 1$, which is a condition in Proposition \ref{modHeath}. 
Now using the said Proposition \ref{modHeath}, $\tau_n(k)$ satisfies the asymptotic formula
\begin{equation*} 
\tau_n(k)=\sigma_{\infty,P}(k)\sigma(k)P^{n-2}+O\left(\left(k^{n/2-1}P^{(3-n)/2}+P^{(n-1)/2}\right)(kP)^{\varepsilon}\right),
\end{equation*}
where 
$$
\sigma_{\infty,P}(k):=\lim\limits_{\varepsilon\rightarrow 0} \frac{1}{2\epsilon} \int\limits_{|F^{(0)}({\bf x})-k/P^2|< \epsilon} w_P({\bf x}){\rm d}{\bf x}
$$
and 
\begin{equation*}
    \sigma(k):=\sum_{q=1}^\infty a_q(k)
\end{equation*}
with 
$$
a_q(k):=\frac{1}{(pq)^n}\sum_{\substack{a\bmod q\\(a,q)=1}}\sum\limits_{\substack{{\bf x}\bmod pq\\ (x_1\cdots x_n,p)=1}}e_q\left(a(\tilde{Q}({\bf x})-k) \right).
$$

We observe that $a_q(k)$ just depends on the residue class of $k$ modulo $q$ and 
\begin{equation} \label{aqk}
\begin{split}
        |a_q(k)|= &\Bigg|\frac{1}{(pq)^n}\sum_{\substack{a\bmod q\\(a,q)=1}}e_q\left(-ak \right)\sum_{\substack{{\bf x}\bmod pq\\ (x_1\cdots x_n,p)=1}}e_q\left(a\tilde{Q}({\bf x})\right)\Bigg|\\
        \leq & \frac{1}{p^nq^{n-1}}\max_{\substack{a\bmod q\\(a,q)=1}}\left|\sum_{\substack{{\bf x}\bmod pq\\ (x_1\cdots x_n,p)=1}} e_q\left(a\tilde{Q}({\bf x}) \right)\right|\\
       \ll_{p,n} & q^{1-n/2},
\end{split}
\end{equation}
where the bound in the last line is obtained by factorizing the multiple exponential sum over ${\bf x}$ into a product of single variable quadratic exponential sums over $x_i$, detecting the coprimality condition $(x_i,p)=1$ using M\"obius inversion and using Proposition \ref{Gaussprop}(v) to bound the resulting complete quadratic Gauss sums.  
It follows that
\begin{equation} \label{ktrans}
\begin{split}
& \sum\limits_{\substack{0<k\leq K\\ (k,p)=1}}\tau_n(k) \sum\limits_{u^2\equiv k\Lambda\bmod{p^s}} \left(\frac{u}{p^s}\right)^{\mu}e_{p^s}(u)\\ =& \sum\limits_{\substack{0<k\leq K\\ (k,p)=1}} \sigma_{\infty,P}(k)\sigma(k)P^{n-2}\sum\limits_{u^2\equiv k\Lambda\bmod{p^s}} \left(\frac{u}{p^s}\right)^{\mu}e_{p^s}(u)+O\left(\left(K^{n/2}P^{(3-n)/2}+KP^{(n-1)/2}\right)(KP)^{\varepsilon}\right)\\
= & P^{n-2}\sum_{q=1}^\infty \sum_{b\bmod q}a_q(b)\sum_{\substack{0<k\leq K\\ k\equiv b\bmod q\\ (k,p)=1}} \sigma_{\infty,P}(k)\sum\limits_{u^2\equiv k\Lambda\bmod{p^s}} \left(\frac{u}{p^s}\right)^{\mu}e_{p^s}(u)+O\left(\left(K^{n/2}P^{(3-n)/2}+KP^{(n-1)/2}\right)(KP)^{\varepsilon}\right)
\end{split}
\end{equation}
if $n\ge 7$. (In this case, the sum over $q$ converges absolutely.) Removing the factor $\sigma_{\infty,P}(k)$ using partial summation and Lemma \ref{Heathrem}, we deduce from Corollary \ref{2stcor} that
\begin{equation} \label{kest}
\sum_{\substack{0<k\leq K\\ k\equiv b\bmod q\\ (k,p)=1}} \sigma_{\infty,P}(k)\sum\limits_{u^2\equiv k\Lambda\bmod{p^s}} \left(\frac{u}{p^s}\right)^{\mu}e_{p^s}(u)
\ll p^{s/2}\left(Kp^s\right)^{\varepsilon}.
\end{equation}
Combining \eqref{aqk}, \eqref{ktrans} and \eqref{kest}, we obtain
\begin{equation} \label{n7}
\begin{split}
& \sum\limits_{\substack{0<k\leq K\\ (k,p)=1}}\tau_n(k) \sum\limits_{u^2\equiv k\Lambda\bmod{p^s}} \left(\frac{u}{p^s}\right)^{\mu}e_{p^s}(u)\\ 
\ll & \left(P^{n-2}\sum\limits_{q=1}^{\infty} q^{2-n/2} p^{s/2}+K^{n/2}P^{(3-n)/2}+KP^{(n-1)/2}\right)\left(KPp^s\right)^{\varepsilon}\\ \ll & \left(P^{n-2}p^{s/2}+K^{n/2}P^{(3-n)/2}+KP^{(n-1)/2}\right)\left(KPp^s\right)^{\varepsilon},
\end{split}
\end{equation}
provided that $n\ge 7$. This yields the desired result in this case on recalling the definition on $P$ in \eqref{Pdef} and the condition $1\le K\le p^s$.

If $n=6$, then using \eqref{aqk}, we approximate $\sigma(k)$ by a finite sum in the form 
\begin{equation}
    \sigma(k)=\sum_{q\leq L}a_q(k)+O(L^{-1}),
\end{equation}
where $L\ge 1$ is a free parameter which we fix later. In this case, along similar lines as above, we get the bound
\begin{equation} \label{similar}
\begin{split}
& \sum\limits_{\substack{0<k\leq K\\ (k,p)=1}}\tau_n(k) \sum\limits_{u^2\equiv k\Lambda\bmod{p^s}} \left(\frac{u}{p^s}\right)^{\mu}e_{p^s}(u)\\ 
\ll & \left(P^{4}\sum\limits_{q\le L} q^{-1} p^{s/2}+K^3P^{-3/2}+KP^{5/2}\right)\left(KPp^s\right)^{\varepsilon}+L^{-1}\left(K^3P^{-3/2}+KP^4\right)(KP)^{\varepsilon}\\ \ll & \left(P^4p^{s/2}+K^3P^{-3/2}+KP^{5/2}+L^{-1}K^3P^{-3/2}+L^{-1}KP^4\right)\left(KLPp^s\right)^{\varepsilon},
\end{split}
\end{equation}
where we use the bound 
\begin{equation*}
\begin{split}
\tau_n(k)\ll & \left|\sigma_{\infty,P}(k)\sigma(k)P^{n-2}\right|+\left(k^{n/2-1}P^{(3-n)/2}+P^{(n-1)/2}\right)(kP)^{\varepsilon}\\ 
\ll & \left(k^{n/2-1}P^{(3-n)/2}+P^{n-2}\right)(kP)^{\varepsilon}\\ = &\left(k^2P^{-3/2}+P^4\right)(kP)^{\varepsilon}
\end{split}
\end{equation*}
to obtain the last term containing $L^{-1}$ in the second line of \eqref{similar}. 
Taking $L:=P^{3/2}$ gives the same bound as in \eqref{n7} in the case $n=6$, which completes the proof.
\end{proof}

In the following, we complete our estimation of the error term $U$. We can apply Lemma \ref{1stlemma} to estimate the inner-most double sums over $k$ and $u$ in \eqref{odderror} and \eqref{evenerror} if $1\le N,K\le p^{m-r}$. Recalling \eqref{Kdef} and \eqref{Mrdef}, this is the case if 
$$
Cp^{(m-r+\varepsilon m)/2}\le N\le  p^{m-r},  
$$
for a suitable constant $C>0$ depending on the coefficients of our quadratic form $Q$.
The above is equivalent to 
$$
C^2\frac{p^{(1+\varepsilon)m}}{N^2}\le p^r\le \frac{p^m}{N}.
$$
If $N\ge Cp^{(1+\varepsilon)m/2}$, then the lower bound above can be omitted. Indeed, this is ensured by the condition $N\ge q^{1/2+\varepsilon}=p^{(1/2+\varepsilon)m}$ in Theorem \ref{thm1}, provided $m$ is large enough. Now using Lemma \ref{1stlemma}, \eqref{Kdef} and \eqref{Mrdef}, the contributions $U^+$ of all $r$'s satisfying $p^r\le p^m/N$ to the right-hand sides of \eqref{odderror} and \eqref{evenerror} are bounded by  
\begin{equation*} 
\begin{split}
    U^+ \ll & \frac{N^n}{p^{m(n+1)/2}}\sum_{\substack{r\ge 0\\ p^r\le p^m/N}} p^{(n-1)r/2} \cdot \left(\left(\frac{p^{m-r}}{N}\right)^{n-2}p^{(m-r)/2}+\left(\frac{p^{m-r}}{N}\right)^{(n+3)/2}\right)p^{\varepsilon m}\\
\ll & \frac{N^n}{p^{m(n+1)/2}} \cdot \left(\left(\frac{p^{m}}{N}\right)^{n-2}p^{{m}/2}+\left(\frac{p^{m}}{N}\right)^{(n+3)/2}\right)p^{\varepsilon m}\\
= & \left(N^2p^{m(n/2-2)}+N^{(n-3)/2}p^m\right)p^{\varepsilon m}.
\end{split}
\end{equation*}

If $p^m/N<p^r\le p^{(1+\varepsilon)m}/N$, then in view of \eqref{weight} and \eqref{taudef}, we have $\tau_n(k)=O(1)$ if $|k|\ll p^{\varepsilon m}$, and $\tau_n(k)$ is negligible if $|k|\gg p^{\varepsilon m}$. Here we use the rapid decay of $\hat{\Phi}$.
Moreover, if $N\ge p^{(1/2+\varepsilon)m}$, then $p^r\le p^{(1+\varepsilon)m}/N$ implies $r\le m/2$. Hence, the contributions $U^{-}$ of all $r$'s satisfying $p^m/N<p^r\le p^{(1+\varepsilon)m}/N$ to the right-hand sides of \eqref{odderror} and \eqref{evenerror} are bounded by 
\begin{equation*} \label{U-}
U^-\ll \frac{N^n}{p^{m(n+1)/2}} \cdot p^{(n-1)m/4}\cdot p^{\varepsilon m} =\frac{N^n}{p^{m(n+3)/4}}\cdot p^{\varepsilon m}.
\end{equation*}

Finally, the contribution of $r$'s such that $p^r> p^{(1+\varepsilon)m}/N$ is $O(1)$ since $\tau_n(k)$ is negligible for all $k$ in this case. Hence,
$$
U=U^++U^-+O(1)\ll \left(N^2p^{m(n/2-2)}+N^{(n-3)/2}p^m+N^np^{-m(n+3)/4}\right)p^{\varepsilon m}.
$$

Lastly, we compare the orders of magnitude of the error term $U$ and the main term $T_0$,  as given in in \eqref{maintermev}. We have $U\ll p^{-\varepsilon m}T_0$ if 
$$
N^2p^{m(n/2-2)}\ll p^{-2\varepsilon m}N^np^{-m}, \quad N^{(n-3)/2}p^m\ll p^{-2\varepsilon m}N^np^{-m}, \quad N^np^{-m(n+3)/4}\ll p^{-2\varepsilon m}N^np^{-m},
$$
which is the case if $N\ge p^{(1/2+\varepsilon)m}$ and $n\ge 5$.
This completes the proof of Theorem \ref{thm1}. 

\section{Proof of Theorem \ref{thm1'} (homogeneous congruences)}
Using \cite[Remark 1, equation (5.1), equation following (5.1), equations following (5.8)]{BaBaHa}, we have 
\begin{equation} \label{secondasymp}
\sum\limits_{\substack{(x_1,...,x_{n})\in \mathbb{Z}^{n}\\ (x_1,...,x_n)\not\equiv (0,...,0)\bmod p\\ Q(x_1,...,x_n)\equiv 0 \bmod{q}\\
}}
\prod_{i=1}^{n}\Phi\left(\frac{x_i}{N}\right)=A_p(Q)\cdot
\hat{\Phi}(0)^{n}\cdot \frac{N^{n}}{q}+U,
\end{equation}
where the error term $U$ is bounded by
\begin{equation} \label{seconderror}
 U\ll\frac{N^{n}}{p^{m(n+1)}}\sum_{r=0}^{m-2}p^{(m+r)n/2+(m-r)}\sum\limits_{\substack{{\bf l}\in \mathbb{Z}^{n}\\|l_1|,...,|l_{n}|\le L_r\\\tilde{Q}({\bf l})\equiv 0 \bmod p^{m-r-2}}} 1+O(1).
\end{equation}
Here,
\begin{equation} \label{Lrdef}
L_r:=\frac{p^{m-r+\varepsilon m}}{N}
\end{equation}
and 
$$
\tilde{Q}({\bf x})=\overline{\lambda_1}x_1^2+\cdots +\overline{\lambda_n}x_n^2, 
$$
where $\lambda_i\overline{\lambda_i}\equiv 1 \bmod{p^m}$. We prove the following lemma, from which a satisfactory estimate for $U$ can be deduced easily.

\begin{lemma} \label{Sbound}
Let $\varepsilon>0$, $p$ be an odd prime, $s\in \mathbb{N}$, $c:=p^s$, $b\in\mathbb{Z}$, $\alpha_1,...,\alpha_4\in \mathbb{Z}$ with $(\alpha_1\alpha_2\alpha_3\alpha_4,p)=1$ and $M\ge 1$ with $8M^2<c$. Then 
$$
\sum\limits_{\substack{(l_1,l_2,l_3,l_4)\in \mathbb{Z}^4\\ |l_1|,|l_2|,|l_3|,|l_4|\le M\\ \alpha_1l_1^2+\alpha_2l_2^2+\alpha_3l_3^2+\alpha_4l_4^2\equiv b\bmod{c}}}1 \ll M^{2+\varepsilon}.
$$   
\end{lemma}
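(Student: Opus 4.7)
The plan is to use Cauchy--Schwarz to split the four-variable congruence into two two-variable counts and then reduce each two-variable count to a divisor-function estimate. For $k\in\mathbb{Z}/c\mathbb{Z}$ set
\[
f(k):=\#\{(l_1,l_2)\in[-M,M]^2:\alpha_1l_1^2+\alpha_2l_2^2\equiv k\bmod c\},
\]
and define $g(k)$ analogously with $\alpha_3,\alpha_4,l_3,l_4$. Then the quantity to be bounded equals $\sum_{k\bmod c}f(k)g(b-k)$, which by Cauchy--Schwarz is at most $\bigl(\sum_k f(k)^2\bigr)^{1/2}\bigl(\sum_k g(k)^2\bigr)^{1/2}$, so by symmetry it suffices to prove $\sum_k f(k)^2\ll M^{2+\varepsilon}$.

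Now $\sum_k f(k)^2$ counts quadruples $(l_1,l_1',l_2,l_2')\in[-M,M]^4$ satisfying $\alpha_1(l_1^2-l_1'^2)\equiv\alpha_2(l_2'^2-l_2^2)\bmod c$. Introducing $n_1:=l_1^2-l_1'^2$ and $n_2:=l_2'^2-l_2^2$, both of which lie in $[-4M^2,4M^2]$, the congruence reads $\alpha_1n_1\equiv\alpha_2n_2\bmod c$. Here the hypothesis $8M^2<c$ becomes essential: the interval $[-4M^2,4M^2]$ has length strictly less than $c$, so since $(\alpha_1\alpha_2,p)=1$ there is at most one $n_2$ in that interval consistent with a given $n_1$. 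Writing $r(n):=\#\{(x,y)\in[-2M,2M]^2:x^2-y^2=n\}$, this gives
\[
\sum_k f(k)^2\ \le\ \sum_{n_1}r(n_1)\,r(n_2(n_1))\ \le\ \sum_n r(n)^2,
\]
where the last step uses Cauchy--Schwarz (together with the injectivity of $n_1\mapsto n_2(n_1)$).

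To close, bound $r(n)$ via the factorization $(x-y)(x+y)=n$: for $n\neq 0$ this yields $r(n)\le 2d(|n|)$ by parametrising $x\pm y$ as divisor pairs of $n$, while $r(0)\ll M$ since the solutions are $x=\pm y$. The classical estimate $\sum_{0<n\le X}d(n)^2\ll X(\log X)^3$ at $X=4M^2$, combined with $r(0)^2\ll M^2$, then gives $\sum_n r(n)^2\ll M^{2+\varepsilon}$, as desired. The only delicate point in the argument is the uniqueness of $n_2$ given $n_1$; this is exactly where the hypothesis $8M^2<c$ is indispensable, since without it one would have $O(M^2/c+1)$ candidates per $n_1$ and would need a more careful accounting.
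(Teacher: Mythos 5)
Your argument is correct and follows the paper's proof in its essential structure: Cauchy--Schwarz to reduce to the second moment $\sum_k f(k)^2$, the factorisation of differences of squares, the use of $8M^2<c$ to pin down the partner value uniquely, and a divisor bound to finish. The only cosmetic differences are that the paper first peels off the diagonal $|l_1|=|m_1|,\,|l_2|=|m_2|$ and then estimates the remainder via the trivial bound $d(k)\ll k^{\varepsilon}$, whereas you fold the $n=0$ term into a single sum over $n$ and invoke the classical second-moment estimate $\sum_{n\le X}d(n)^2\ll X\log^3 X$; both routes are valid and of comparable length.
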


\begin{proof}
We write 
\begin{equation*}
\begin{split}
\sum\limits_{\substack{(l_1,l_2,l_3,l_4)\in \mathbb{Z}^4\\ |l_1|,|l_2|,|l_3|,|l_4|\le M\\ \alpha_1l_1^2+\alpha_2l_2^2+\alpha_3l_3^2+\alpha_4l_4^2\equiv b\bmod{c}}}1
 = \sum\limits_{a\bmod{c}}
\Bigg(\sum\limits_{\substack{(l_1,l_2)\in \mathbb{Z}^2\\ |l_1|,|l_2|\le M\\ \alpha_1 l_1^2+\alpha_2 l_2^2\equiv a \bmod{c}}} 1\Bigg)\Bigg(\sum\limits_{\substack{(l_3,l_4)\in \mathbb{Z}^2\\ |l_3|,|l_4|\le M\\ -\alpha_3l_3^2-\alpha_4 l_4^2+b\equiv a \bmod{c}}} 1\Bigg).
\end{split}
\end{equation*}
Using the Cauchy-Schwarz inequality, the right-hand side is bounded by
$$
\ll (S_1S_2)^{1/2}, \mbox{ where } S_1:=\sum\limits_{a\bmod c} \Bigg(\sum\limits_{\substack{(l_1,l_2)\in \mathbb{Z}^2\\ |l_1|,|l_2|\le M\\ \alpha_1 l_1^2+\alpha_2 l_2^2\equiv a \bmod{c}}} 1\Bigg)^2 \mbox{ and } S_2:=\sum\limits_{a\bmod c}\Bigg(\sum\limits_{\substack{(l_3,l_4)\in \mathbb{Z}^2\\ |l_3|,|l_4|\le M\\ -\alpha_3l_3^2-\alpha_4 l_4^2+b\equiv a \bmod{c}}} 1\Bigg)^2.
$$

Clearly,
$$
S_1=\sum\limits_{\substack{(l_1,l_2,m_1,m_2)\in \mathbb{Z}^4\\ |l_1|,|l_2|,|m_1|,|m_2|\le M\\ \alpha_1l_1^2+\alpha_2l_2^2\equiv \alpha_1m_1^2+\alpha_2m_2^2\bmod{c}}} 1.  
$$
We divide this into
$$
S_1=D+E,
$$
where 
$$
D:=\sum\limits_{\substack{(l_1,l_2,m_1,m_2)\in \mathbb{Z}^4\\ |l_1|=|m_1|\le M\\ |l_2|=|m_2|\le M}} 1
$$
and 
$$
E:=\sum\limits_{\substack{(l_1,l_2,m_1,m_2)\in \mathbb{Z}^4\\ |l_1|,|l_2|,|m_1|,|m_2|\le M\\ |l_1|\not=|m_1| \ \text{or}\ |l_2|\not=|m_2|\\ 
\alpha_1l_1^2+\alpha_2l_2^2\equiv \alpha_1m_1^2+\alpha_2m_2^2\bmod{c}}} 1.
$$
Obviously, 
$$
D\ll M^2.
$$
Further, we write $E$ as 
$$
E = \sum\limits_{\substack{(l_1,l_2,m_1,m_2)\in \mathbb{Z}^4\\ |l_1|,|l_2|,|m_1|,|m_2|\le M\\ \alpha_1(l_1-m_1)(l_1+m_1)\equiv \alpha_2(m_2-l_2)(m_2+l_2) \bmod{c}\\ (l_1-m_1)(l_1+m_1)\not=0 \ \text{or}\  (m_2-l_2)(m_2+l_2)\not=0}} 1.
$$
We observe that if one of the numbers 
$$
A_1=(l_1-m_1)(l_1+m_1) \quad \mbox{and} \quad A_2=(m_2-l_2)(m_2+l_2)
$$
in the summation condition above equals 0, then the other one equals 0 as well. To see this, recall $c=p^s$ and note that if $\alpha_i A_i\equiv 0 \bmod{p^s}$, then $p^u|(l_i-m_i)$ and $p^v|(l_i+m_i)$ with $u+v\ge s$, which is not possible if $8M^2< c$ unless $l_i-m_i=0$ or $l_i+m_i=0$. 
It follows that
\begin{equation*}
\begin{split}
E\ll & \sum\limits_{\substack{(l_1,l_2,m_1,m_2)\in \mathbb{Z}^4\\ |l_1|,|l_2|,|m_1|,|m_2|\le M\\ \alpha_1(l_1-m_1)(l_1+m_1)\equiv \alpha_2(m_2-l_2)(m_2+l_2) \bmod{c}\\ (l_1-m_1)(l_1+m_1)\not=0 \ \text{and}\ (m_2-l_2)(m_2+l_2)\not=0}} 1\\ 
 \ll & \sum\limits_{\substack{(A_1,A_2)\in \mathbb{Z}^2\\ 0<|A_1|,|A_2|\le 4M^2\\ \alpha_1A_1\equiv \alpha_2A_2\bmod{c}}} d(|A_1|)d(|A_2|),
\end{split}
\end{equation*}
where $d(k)$ denotes the number of divisors of $k\in \mathbb{N}$. Taking the condition $(\alpha_1\alpha_2,c)=1$ into account, we observe that if $8M^2<c$, then any given $A_2$ in the summation condition above fixes $A_1$, if it exists at all.  Hence, using the well-known divisor bound $d(k)\ll k^{\varepsilon}$, we trivially get
\begin{equation*}
E\ll M^{2+\varepsilon}.
\end{equation*}

Collecting everything above, we obtain the bound
$$
S_1\ll M^{2+\varepsilon}.
$$
The same arguments as above apply to $S_2$, getting
$$
S_2\ll M^{2+\varepsilon},
$$
and hence the desired result follows. 
\end{proof}

Now if $8L_r^2\le p^{m-r-2}$, then we may apply Lemma \ref{Sbound} above to bound the inner-most sum over ${\bf l}$ in \eqref{seconderror} by 
\begin{equation} \label{ulti}
\begin{split}
\sum\limits_{\substack{{\bf l}\in \mathbb{Z}^{n}\\|l_1|,...,|l_{n}|\le L_r\\\tilde{Q}({\bf l})\equiv 0 \bmod p^{m-r-2}}} 1 = & \sum\limits_{\substack{(l_5,...,l_n)\in \mathbb{Z}^{n-4}\\
|l_5|,...,|l_{n}|\le L_r}}\sum\limits_{\substack{(l_1,...,l_4)\in \mathbb{Z}^{4}\\
|l_1|,...,|l_{4}|\le L_r\\ \overline{\lambda_1}l_1^2+\cdots +\overline{\lambda_4}l_4^2\equiv -(\overline{\lambda_5}l_5^2+\cdots +\overline{\lambda_n}l_n^2) \bmod{p^{m-r-2}}}} 1\\
\ll & \sum\limits_{\substack{(l_5,...,l_n)\in \mathbb{Z}^{n-4}\\
|l_5|,...,|l_{n}|\le L_r}} L_r^{2+\varepsilon} \\
\ll & L_r^{n-2+\varepsilon},
\end{split}
\end{equation}
where it is understood that the outer sum over $(l_5,...,l_n)$ is empty and $\overline{\lambda_5}l_5^2+\cdots +\overline{\lambda_n}l_n^2=0$ if $n=4$. It is easy to check that indeed 
$8L_r^2\le p^{m-r-2}$ if 
\begin{equation} \label{stronger}
N\ge p^{(1/2+2\varepsilon)m}=q^{1/2+2\varepsilon}
\end{equation}
and $m$ is large enough: We have 
$$
8L_r^2\le p^{m-r-2}\Longleftrightarrow 8\cdot \left(\frac{p^{m-r+\varepsilon m}}{N}\right)^2\le p^{m-r-2} \Longleftrightarrow N\ge 8^{1/2}p^{m/2-r/2+\varepsilon m+1}
$$
and 
$$
8^{1/2}p^{m/2-r/2+\varepsilon m+1}\le 8^{1/2}p^{m/2+\varepsilon m+1}\le p^{(1/2+2\varepsilon)m}
$$
for large enough $m$. 

Combining \eqref{seconderror}, \eqref{Lrdef} and \eqref{ulti}, we obtain
$$
 U\ll p^{\varepsilon m}\cdot \frac{N^{n}}{p^{m(n+1)}}\sum_{r=0}^{m-2}p^{(m+r)n/2+(m-r)} \cdot \left(\frac{p^{m-r}}{N}\right)^{n-2} \ll p^{\varepsilon m}\cdot \frac{N^{n}}{p^{m(n+1)}}\cdot  p^{mn/2+m} \cdot \left(\frac{p^{m}}{N}\right)^{n-2}\ll N^2p^{(n/2-2+\varepsilon)m}
$$
if $n\ge 4$. 

It remains to compare this bound with the size of the main term. We have
$$
N^2p^{(n/2-2+\varepsilon)m}\ll p^{-\varepsilon m} T_0
$$
if 
$$
N^2p^{(n/2-2+\varepsilon)m}\le p^{-\varepsilon m} \cdot \frac{N^n}{p^m},
$$
which is the case if $n\ge 4$ and $N\ge p^{(1/2+\varepsilon)m}=q^{1/2+\varepsilon}$. In \eqref{stronger}, we imposed the stronger condition $N\ge q^{1/2+2\varepsilon}$, but we are free to redefine $2\varepsilon$ as $\varepsilon$. This establishes the result of Theorem \ref{thm1'}.

\end{document}